\newcommand{\PP}{\mathbb{P}}
\newcommand{\OO}{\mathcal{O}}
\newcommand{\MM}{\mathcal{M}}
\newcommand{\NN}{\mathcal{N}}
\newcommand{\FF}{\mathcal{F}}
\newcommand{\aA}{\mathcal{A}}
\newcommand{\BB}{\mathcal{B}}
\newcommand{\sing}{\textsf{sing}}
\newcommand{\smooth}{\textsf{smooth}}
\newcommand{\shrp}{\textsf{sharp}}
\newcommand{\cC}{\mathcal{C}}
\newcommand{\ev}{\operatorname{ev}}
\newcommand{\pr}{\operatorname{pr}}
\newcommand{\Spec}{\operatorname{Spec}}
\newcommand{\codim}{\operatorname{codim}}
\newcommand{\End}{\operatorname{End}}
\newtheorem{theorem}{Theorem}[section]
\newtheorem{lemma}[theorem]{Lemma}
\newtheorem{proposition}[theorem]{Proposition}
\newtheorem{corollary}[theorem]{Corollary}
\newtheorem{definition}[theorem]{Definition}
\newtheorem{conjecture}[theorem]{Conjecture}
\author{Eric Riedl and David Yang}
\title{Kontsevich spaces of rational curves on Fano hypersurfaces}
\begin{document}

\maketitle

\abstract{We investigate the spaces of rational curves on a general hypersurface.  In particular, we show that for a general degree $d$ hypersurface in $\PP^n$ with $n \geq d+2$, the space $\overline{\MM}_{0,0}(X,e)$ of degree $e$ Kontsevich stable maps from a rational curve to $X$ is an irreducible local complete intersection stack of dimension $e(n-d+1)+n-4$.  This resolves all but one case of a conjecture of Coskun, Harris and Starr, and also proves that the Gromov-Witten invariants of these hypersurfaces are enumerative.}

\section{Introduction}

A basic way of attempting to understand a variety is to understand its subvarieties.  For example, what subvarieties appear?  What are the irreducible components of their moduli spaces?  What are the dimensions of these components?  Much work has been done on this area, but the question appears to be extremely difficult in general, and there are many simple-sounding open questions.  For instance, the dimensions of the spaces of degree $d$, genus $g$ curves in $\PP^n$ are unknown in general.

When considering these questions, of particular interest is understanding the geometry of the rational subvarieties of a given variety, partly because these questions are sometimes more tractable and partly because the nice properties enjoyed by rational varieties give us particular insight into the geometry of the variety.  For instance, Harris, Mazur and Pandharipande \cite{HMP} prove that any smooth hypersurface in sufficiently high degree is unirational by looking at the space of $k$-planes contained in the hypersurface.  Or, in \cite{starrdompaper} Starr, again by considering the spaces of $k$-planes contained in a hypersurface, proves that the $k$-plane sections of a smooth hypersurface in sufficiently high dimension dominate the moduli space of hypersurfaces in $\PP^k$.  Rational varieties, particularly rational curves, have an important role in birational geometry and the Minimal Model Program, as is evidenced by the proofs by Campana \cite{Campana} and Koll\'ar-Miyaoka-Mori \cite{KMM} that Fano varieties are rationally connected.  In a different direction, genus $0$ Gromov-Witten invariants, which are important in mathematical physics, are an attempt to count the number of rational curves satisfying certain incidence conditions.  Knowing the dimensions of Kontsevich spaces allows us to connect the Gromov-Witten theory calculations to actual curves, proving that Gromov-Witten invariants are enumerative.

Here we focus on finding the dimensions of the spaces of rational subvarieties of hypersurfaces over an algebraically closed field of characteristic $0$.  The main result is that the spaces of rational curves on general hypersurfaces of degree $d \leq n-2$ in $\PP^n$ have the expected dimensions.

\section{Acknowledgements}
We would like to thank Joe Harris, Jason Starr, and Roya Beheshti for helpful conversations.  We would also like to thank the anonymous referee for many helpful comments.

\section{Previous work on Rational Curves}
\label{previousWork}
Let $X$ be a hypersurface in $\PP^n$ of degree $d$.  We are interested in the space of degree $e$ rational curves on $X$.  Dimensions of rational curves on Fano varieties have been studied for quite a while.  Mori \cite{Mori} proved that every point of a Fano manifold has a rational curve through it, and the spaces of rational curves through a general point of a Fano manifold were further studied by Koll\'ar-Miyaoka-Mori \cite{KMM}.  This sort of study naturally lead to questions about the dimension of the space of rational curves on a Fano manifold $X$.  For low degrees of curves there are many classical results.  See the textbooks of Koll\'{a}r \cite{kollar1996rational} and Debarre \cite{debarre} for more details on lines.  However, there were few results that applied for all degrees $e$ of rational curve, partly because of technical problems with the compact moduli spaces of rational curves in $\PP^n$ in use at the time: the Hilbert scheme and the Chow variety.

Some of these technical obstacles were removed when Kontsevich introduced the Kontsevich space $\overline{\MM}_{0,0}(\PP^n,e)$ of degree $e$ stable rational maps to $\PP^n$ and showed that it is smooth (as a stack) of the expected dimension \cite{kontsevich}.  Work of Kim-Pandharipande \cite{kimPandh} showed that these spaces were irreducible.  For more details on the Kontsevich space, including its construction, see \cite{FP}.  

In this paper, we consider $\overline{\MM}_{0,0}(X,e)$, that is, the subvariety of $\overline{\MM}_{0,0}(\PP^n,e)$ of rational maps whose image lies in $X$.  It is not hard to see that $\overline{\MM}_{0,0}(X,e)$ is cut out by a section of a rank $ed+1$ vector bundle on $\overline{\MM}_{0,0}(\PP^n,e)$.  Thus, every component of $\overline{\MM}_{0,0}(X,e)$ will have dimension at least $(e+1)(n+1)-4 - (ed+1) = e(n-d+1)+n-4$.  Let $R_e(X)$ be the union of all irreducible components of $\overline{\MM}_{0,0}(X,e)$ whose general elements correspond to generically injective maps from irreducible curves.  

There are examples of smooth hypersurfaces $X$ with $\dim R_e(X) > e(n-d+1)+n-4$.  For instance, it is well known that for any degree $d$, there exists a smooth hypersurface of degree $d$ containing linear spaces of half its dimension, i.e., there is a smooth hypersurface $Y \subset \PP^{n}$ with $n = 2k+1$ containing a linear space $\Lambda \cong \PP^k$.  In $\Lambda$, there will be $(e+1)(k+1)-4$ dimensional family of degree $e$ rational curves, which will be larger than the expected dimension $e(n-d+1)+n-4$ when $d > \frac{n+1}{2}+\frac{n-1}{2e}$.  Other than this example, we know of few other examples of Fano hypersurfaces containing too many rational curves, and it would be interesting to know of more examples, or to prove that for a certain degree range that no such examples exist.  Aside from the results of Beheshti \cite{beheshtidJDb} and the thesis of Pugin \cite{pugin}, little is known about the spaces of rational curves on arbitrary smooth hypersurfaces.

This leads naturally to the following open question, various special cases of which were conjectured by various people.  We state it as a Conjecture because it is a natural falsifiable statement which should form the framework for discussion for this problem, not necessarily because it has been conjectured in all cases.

\begin{conjecture}
\label{expDim}
For $X$ a very general hypersurface of degree $d$ in $\PP^n$ with $(n,d) \neq (3,4)$, the dimension of $R_e(X)$ is equal to $\max \{-1, e(n-d+1)+n-4 \}$, the minimum possible.
\end{conjecture}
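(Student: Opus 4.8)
The plan is to prove the assertion of the abstract, namely that Conjecture~\ref{expDim} holds when $n \ge d+2$, by a double induction on the degree $e$ and on the ambient dimension $n$, with $d$ fixed. Recall that $\overline{\MM}_{0,0}(X,e)$ is the zero locus, inside the smooth Deligne--Mumford stack $\overline{\MM}_{0,0}(\PP^n,e)$ of dimension $(e+1)(n+1)-4$, of the tautological section of the rank $ed+1$ bundle $\mathcal{E} := \pi_*\ev^*\OO_{\PP^n}(d)$ cut out by the equation $F$ of $X$; since $(e+1)(n+1)-4-(ed+1) = e(n-d+1)+n-4$, it suffices to show, for general $X$, that: (a) every irreducible component of $\overline{\MM}_{0,0}(X,e)$ has dimension at most $e(n-d+1)+n-4$, so that the section is regular and the stack is a local complete intersection, pure of that dimension; and (b) $\overline{\MM}_{0,0}(X,e)$ is connected and has a dense irreducible open substack, hence is irreducible. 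The base cases I would dispose of by hand: $e=1$ (the Fano scheme of lines, classically smooth and irreducible of dimension $2n-d-3 = (n-d+1)+n-4$ when $n\ge d+2$); the range $e(n-d+1)+n-4<0$, where the count below shows a general $X$ contains no such curve; and the small ambient dimensions $n \le d+1$.

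\noindent\textbf{Stratification and the key inequality.} For (a) I would stratify $\overline{\MM}_{0,0}(X,e)$ into (i) maps with reducible source, (ii) multiple covers, and (iii) maps with smooth irreducible source that are birational onto an image curve spanning exactly a $\PP^k$ with $k \le \min(e,n)$, and bound each stratum by induction. For (i): such a map is a gluing of stable maps of degrees $e_1+e_2=e$ along a point, so applying the inductive hypothesis to the $\overline{\MM}_{0,1}(X,e_i)$ and to the diagonal intersection of their evaluation maps (which one checks are equidimensional) bounds this stratum by dimension $e(n-d+1)+n-5$, i.e.\ it is a boundary divisor of the expected codimension one. For (ii): a degree-$e$ multiple cover $f = g\circ\phi$ with $\deg\phi = m \ge 2$ is determined by $g\in\overline{\MM}_{0,0}(X,e/m)$ (dimension $\le (e/m)(n-d+1)+n-4$ by induction) together with $\phi$ up to reparametrisation (dimension $2m-3$), and a short computation using $n\ge d+2$ shows the total is strictly below $e(n-d+1)+n-4$. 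For (iii): the curves whose span is a fixed $\PP^k \subset \PP^n$ lie on the degree $d$ hypersurface $X\cap\PP^k$, so this substratum has dimension at most $(k+1)(n-k) + \dim\overline{\MM}_{0,0}(X\cap\PP^k,e)$; when $k\ge d+2$ the induction on $n$ bounds the last term by $e(k-d+1)+k-4$, and then a direct computation gives
\[
\bigl[e(n-d+1)+n-4\bigr] - \bigl[(k+1)(n-k)+e(k-d+1)+k-4\bigr] = (n-k)(e-k) \ge 0,
\]
with equality exactly when $k = \min(e,n)$; for $k\le d+1$ one bounds the contribution directly, the section $X\cap\PP^k$ then being linear, or covered by lines, or carrying only a small family of rational curves.

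\noindent\textbf{The top stratum.} The only stratum not already strictly below the expected dimension is $k = \min(e,n)$. When $e\le n$, the image is a rational normal curve in a $\PP^e$ lying on the general hypersurface $X\cap\PP^e$, and a parameter count --- the Grassmannian choice of $\PP^e$, the $\mathrm{PGL}$-orbit of rational normal curves, and the $ed+1$ conditions to lie on $X\cap\PP^e$, which are independent by projective normality --- gives dimension exactly $e(n-d+1)+n-4$. When $e\ge n$, the image is a non-degenerate degree $e$ rational curve in $\PP^n$ lying on $X$, and the analogous count identifies this stratum with an open part of the locus in $\overline{\MM}_{0,0}(\PP^n,e)$ where the bundle map $H^0(\PP^n,\OO(d))\otimes\OO \to \pi_*\ev^*\OO(d)$ degenerates by the expected amount, again of dimension $e(n-d+1)+n-4$ for general $X$; that this stratum is irreducible I would deduce from a monodromy argument. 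With the preceding paragraph this proves (a), hence the local complete intersection and dimension statements.

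\noindent\textbf{Irreducibility, and the main obstacle.} For (b): since $\overline{\MM}_{0,0}(X,e)$ is now pure of the expected dimension and its top stratum $U_X$ is irreducible of that dimension, it is irreducible as soon as every boundary divisor $\Delta$ from (i) lies in the closure $\overline{U_X}$ --- equivalently, as soon as a general $[f]\in\Delta$, with nodal source $C = C_1\cup C_2$, admits a smoothing of its node inside $X$. This follows from $H^1(C,f^*T_X) = 0$: taking $[f_i]$ general in $\overline{\MM}_{0,0}(X,e_i)$, the inductive hypothesis gives $H^1(C_i,f_i^*T_X) = 0$, and a normalisation sequence together with a bound on the splitting type of $f_i^*T_X$ on $C_i\cong\PP^1$ --- sufficiently positive precisely when $n\ge d+2$ --- yields the vanishing on $C$, hence smoothness of $\overline{\MM}_{0,0}(X,e)$ at $[f]$. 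The hard part, and the place where the argument is most delicate, is the interaction of the degenerate stratum in (iii) with this cohomological input: a linear section of a general hypersurface need not be general in the smaller $\PP^k$, so the induction on $n$ must be arranged to carry enough genericity (or the family of all degenerate maps must be bounded uniformly), the small cases $n\le d+1$ require genuinely ad hoc treatment, and the positivity of $f^*T_X$ needed both for the top-stratum count when $e$ is large and for smoothing the boundary is exactly the estimate that fails at $n = d+1$.
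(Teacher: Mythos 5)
There is a genuine gap, and it sits exactly where you flag it yourself: the induction on $n$ via linear sections. Your bound for stratum (iii) needs $\dim\overline{\MM}_{0,0}(X\cap\PP^k,e)\le e(k-d+1)+k-4$, which you propose to get from the inductive hypothesis ``for a general degree $d$ hypersurface in $\PP^k$.'' But the linear sections of a fixed general $X$ sweep out only a $(k+1)(n-k)$-dimensional family inside the $\bigl(\binom{k+d}{d}-1\bigr)$-dimensional space of degree $d$ hypersurfaces in $\PP^k$, so the generic statement in $\PP^k$ says nothing about $X\cap\PP^k$ unless you also prove that the locus of bad hypersurfaces in $\PP^k$ has codimension strictly greater than $(k+1)(n-k)$. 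Such an effective codimension bound is not a technical refinement to be ``arranged to carry enough genericity''; it is the entire content of the theorem. The paper's actual argument is organized around precisely this: it defines the loci $S_e$ of hypersurfaces that fail to be $e$-level/$e$-layered and inductively bounds $\codim S_e$ in the full parameter space $\PP^N$, using a Bend-and-Break statement (Corollary \ref{bendBreak}) applied to a $(2n-1)$-dimensional family of curves assembled by ``borrowing'' curves from nearby hypersurfaces whenever the codimension would otherwise drop too fast. No linear sections and no induction on $n$ appear; the induction is on $e$ alone, run through the pointed spaces $\overline{\MM}_{0,1}(X,e)$ and the evaluation map.

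Two further steps in your outline assume what must be proved. In stratum (i) you bound the reducible locus by gluing and parenthetically assert that the evaluation maps ``one checks are equidimensional'': that equidimensionality is the flatness statement (Conjecture \ref{flatConj}), which is strictly stronger than the unpointed dimension count and is the quantity the whole induction must actually control (this is why the paper's Proposition \ref{expRedDim} is stated for fibers of $\ev$ over a point, with separate bounds at smooth and singular points of $X$). And in the top stratum with $e\ge n$, identifying the birational non-degenerate locus with ``the locus where the bundle map degenerates by the expected amount, of dimension $e(n-d+1)+n-4$ for general $X$'' is a restatement of the conjecture for that stratum, not an argument; the expected-codimension behaviour of that degeneracy locus for general $X$ is exactly what fails on special hypersurfaces and must be ruled out. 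Your irreducibility mechanism (smoothing nodes via $H^1(C,f^*T_X)=0$) is in the same spirit as the paper's broom argument and could be made to work once equidimensionality is known, but as written the top-stratum irreducibility rests on an unspecified monodromy claim, whereas the paper reduces to the irreducibility of the space of lines through a point and the connectedness of the space of brooms.
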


As usual, we say a property holds for a ``very general'' hypersurface when it holds outside a countable union of proper closed subvarieties in the moduli space of hypersurfaces.  Conjecture \ref{expDim} is open for large ranges of $d$, $n$ and $e$.  However, some cases are known, and we summarize them here.  For $e \leq d+2$, the conjecture follows from a result of Gruson, Lazarsfeld and Peskine \cite{GLP}.  Furukawa makes this connection explicit and also proves a weaker result that holds in arbitrary characteristic \cite{Furukawa}.

For $n=3, d=4$, Conjecture \ref{expDim} is known to be false.  Very general quartic surfaces in $\PP^3$ all contain (nodal) rational curves, while the expected dimension of rational curves is $-1$ for all $e$.  This result was known to Mumford, although there has been considerable work on understanding exactly the degrees in which the curves appear, and the counts of these curves \cite{JunLi}.

In the special case $d=5$, $n=4$, Conjecture \ref{expDim} is a version of the well-known Clemens's Conjecture, which has been worked on by Katz, Kleiman-Johnsen, Cotterill and others \cite{Katz1986, Johnsen1995, Johnsen1997, Cotterill04rationalcurves, Cotterill07rationalcurves}.  Despite all of this progress, Clemens's Conjecture remains open for $e \geq 12$.

Some work has been done on Conjecture \ref{expDim} for $d$ larger than $n$.  Voisin \cite{voisin1996, voisin1998}, improving on work of Clemens \cite{clemensCurveHyp} and Ein \cite{Ein1988, Ein1991}, proved as a special case of a more general result that if $d \geq 2n-2$, then a general $X$ contains no rational curves.  Pacienza \cite{Pacienza2d-3} proves that for $d=2n-3$ and $n > 4$, $X$ contains lines but no other rational curves.

In the case $d<n$, Conjecture \ref{expDim} is a special case of a more general Conjecture of Coskun-Harris-Starr.

\begin{conjecture}
\label{flatConj}
For $X$ a general degree $d$ hypersurface in $\PP^n$ with $n \geq d+1$, then $\overline{\MM}_{0,0}(X,e)$ has the expected dimension $e(n-d+1)+n-4$, and the evaluation morphism $\ev : \overline{\MM}_{0,1}(X,e) \to X$ is flat of relative dimension $e(n-d+1)-2$.
\end{conjecture}

Clearly, Conjecture \ref{flatConj} for a given $n$, $d$, and $e$ implies Conjecture \ref{expDim} for that $n$, $d$ and $e$.  Notice that Conjecture \ref{flatConj} is for a general hypersurface, as opposed to a very general hypersurface.  Since there are countably many degrees of curve, it seems more natural to make conjectures of this form for very general hypersurfaces.  However, Harris-Roth-Starr \cite{HRS} showed that knowing Conjecture \ref{flatConj} for $e$ up to a certain threshold degree $\frac{n+1}{n-d+1}$ proves Conjecture \ref{flatConj} for all $e$, and used this to prove Conjecture \ref{flatConj} for  $d \leq \frac{n+1}{2}$.  In particular, when using their technique it suffices to consider general hypersurfaces, not very general ones.  Beheshti and Kumar \cite{BeheshtiKumar} prove Conjecture \ref{flatConj} for $d \leq \frac{2n+2}{3}$.

In this paper, we improve on Beheshti-Kumar's \cite{BeheshtiKumar} and Harris-Roth-Starr's \cite{HRS} results.  \footnote{As we were working on this write-up, we received word that Roya Beheshti had independently proven Conjecture \ref{expDim} for $d < n - 2\sqrt{n}$.  Her techniques seem likely to apply to hypersurfaces in the Grassmannian.}

\begin{theorem}
\label{firstTheoremStatement}
Let $X$ be a general degree $d$ hypersurface in $\PP^n$, with $n \geq d+2$.  Then $\overline{\MM}_{0,0}(X,e)$ is an irreducible local complete intersection stack of the expected dimension $e(n-d+1)+n-4$ and $\ev: \overline{\MM}_{0,1}(X,e) \to X$ is flat with fibers of dimension $e(n-d+1)-2$.
\end{theorem}

The idea of the proof in Harris-Roth-Starr \cite{HRS} is to prove Conjecture \ref{flatConj} for $e=1$, then to show that every rational curve through a given point specializes to a reducible curve.  By flatness of the evaluation morphism, the result follows by induction.  Beheshti-Kumar \cite{BeheshtiKumar} get a stronger result by proving flatness of the evaluation morphism for $e=2$ and then using the Harris-Roth-Starr result.  The key to Harris-Roth-Starr's approach is a version of Bend-and-Break which allows them to show that when there are enough curves in the fibers of the evaluation morphism $\ev : \overline{\MM}_{0,1}(X,e) \to X$, every curve must specialize to a reducible curve.  The reason that their bound applies only for small $d$ is because when $d$ gets larger, there are not enough curves to ensure that every component of a fiber of $\ev$ contains reducible curves.

Our approach builds on Harris-Roth-Starr's by borrowing curves from nearby hypersurfaces to ensure that there are enough curves to apply Bend-and-Break.  We do this by bounding the codimension of the space of hypersurfaces for which the statement of Conjecture \ref{expDim} does not hold.  Crucial to our analysis are the notions of \emph{$e$-level} and \emph{$e$-layered} hypersurfaces, that is, hypersurfaces that have close to the right dimensions of degree-at-most-$e$ rational curves through any given point and hypersurfaces whose rational curves all specialize to reducible curves.  Our proof proceeds by inductively bounding the codimension of the space of hypersurfaces that are not $e$-layered.

The rest of this document is organized as follows.  First we state and prove the version of Bend-and-Break that we will use.  Then we sketch how the version of Bend-and-Break can be used to prove the $d \leq \frac{n+1}{2}$ result of Harris-Roth-Starr.  Next, we introduce the concepts of $e$-levelness and $e$-layeredness and prove some important properties of them.  Then we prove Theorem \ref{firstTheoremStatement} using these notions.

\section{Background for Rational Curves on Hypersurfaces}

We will use standard facts about Kontsevich spaces, such as those found in \cite{FP}.  We treat $\overline{\MM}_{0,0}(X,e)$ as a coarse moduli space.  Occasionally we will need to use the result found in Vistoli \cite{vistoli_intersection} which says that there is a scheme $Z$ that is a finite cover of the stack $\overline{\MM}_{0,0}(X,e)$.  We also need the following well-known result.

\begin{lemma}
\label{kdimConds}
Let $Z$ be a $k$-dimensional variety.  Then the space of degree $d$ hypersurfaces containing $Z$ is codimension at least $\binom{d+k}{k}$ in the space of all hypersurfaces.
\end{lemma}
\begin{proof}
Using automorphisms of $\PP^n$, we can degenerate $Z$ to a (possibly non-reduced) scheme supported on a $k$-plane.  The space of degree $d$ hypersurfaces containing the $k$-plane has dimension $N-\binom{d+k}{k}$, where $N=\binom{n+d}{d}-1$, and so the dimension of the space of hypersurfaces containing the degeneration of $Z$ will be at most $N-\binom{d+k}{k}$, and hence, the dimension of the space of hypersurfaces containing $Z$ is at most $N-\binom{d+k}{k}$.
\end{proof}

The following variant of a result whose proof we read in \cite{BeheshtiKumar} (although it was known before this) is the version of Bend-and-Break that we will use.

\begin{proposition}
\label{twoSections}
For $T \subset \overline{\MM}_{0,0}(\PP^n,e)$ a complete subvariety with $\dim T = 1$, suppose each of the maps parameterized by $T$ contains two distinct fixed points $p, q \in \PP^n$ in its image.  Then $T$ parameterizes maps with reducible domains. 
\end{proposition}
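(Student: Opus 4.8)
The plan is to argue by contradiction: suppose $T$ parameterizes maps with irreducible domains, so a general member of $T$ is an honest map $f : \PP^1 \to \PP^n$ of degree $e$, birational onto its image or at least a degree-$m$ cover of its image for fixed $m$. First I would normalize: passing to a finite cover of $T$ if necessary, mark the two points of the domain lying over $p$ and $q$ (using that $p,q$ are in the image of every curve and irreducibility gives finitely many preimages), so we obtain a family $\pi : \cC \to T$ of genus-$0$ curves with two sections $\sigma_p, \sigma_q$ and an evaluation map $F : \cC \to \PP^n$ with $F \circ \sigma_p \equiv p$, $F \circ \sigma_q \equiv q$. Since the curves sweep out a surface, $F$ has two-dimensional image, so $F$ is generically finite onto a surface $S \subset \PP^n$.

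The heart of the argument is a Bend-and-Break style degeneration on the total space $\cC$. Contract one of the sections, say $\sigma_q$: since each fiber of $\pi$ is a $\PP^1$ and $\sigma_q$ is a section, blowing down or using the relative canonical, one shows $\cC$ is a ruled surface (after minimalizing) with the two sections $\sigma_p, \sigma_q$ disjoint or meeting controlledly. The key numerical input is that $F^* \OO_{\PP^n}(1) \cdot \sigma_q$ is constant (it is the degree of the point $q$'s fiber contribution, namely $0$ since $F$ collapses $\sigma_q$), while the fibers $\cC_t$ have $F^*\OO(1)\cdot \cC_t = e > 0$. Running the standard contraction of extremal rays / intersection-theory computation on the ruled surface $\cC$, I would show that if every fiber stayed irreducible the section $\sigma_q$ (contracted by $F$) would have to be a fiber component or force $\sigma_p$ and $\sigma_q$ to have negative intersection with the ruling in an impossible way — concretely, that the class $[\sigma_q]$ in $N_1(\cC)$ cannot simultaneously be $F$-contracted, meet every fiber once, and have the fibers irreducible unless $\dim F(\cC) \le 1$. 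This contradicts the two-dimensional sweep, so some fiber $\cC_t$ must be reducible, i.e. $T$ parameterizes a map with reducible domain.

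The cleanest way to run this is probably the classical Bend-and-Break lemma (as in Mori / Kollár's book, or the version already isolated in \cite{BeheshtiKumar}): a nonconstant family of rational curves through two fixed points cannot be a family of irreducible curves. One produces a one-parameter family of maps $\PP^1 \to \PP^n$ all sending $0 \mapsto p$ and $\infty \mapsto q$, and because the family is non-isotrivial (the image moves, sweeping a surface), the limit must degenerate: the associated curve in $\PP^n$ breaks into at least two components, one containing $p$ and one containing $q$. Since $\overline{\MM}_{0,0}(\PP^n,e)$ is proper, the limit lies in $T$ (or its closure, which we may assume is in $T$ after including it), giving the desired reducible map. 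I would cite \cite{BeheshtiKumar} for the precise statement and only supply the verification that our hypotheses — one-dimensional $T$, two fixed base points, two-dimensional sweep — are exactly what that lemma requires.

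The main obstacle I anticipate is bookkeeping at the boundary: some maps in $T$ may already have reducible or degenerate domains, or the two sections $\sigma_p,\sigma_q$ may pass through nodes of the domain or be undefined where the preimage of $p$ or $q$ is positive-dimensional (a contracted component through $p$). Handling this requires base-changing $T$ to a curve $\tilde T$ over which a nice family exists, and checking that the ``two-dimensional sweep'' hypothesis survives the base change and the removal of finitely many bad points — so that we may assume the general member is an immersion of an irreducible $\PP^1$ meeting $p$ and $q$ transversely, which is the clean input to Bend-and-Break. The rest is a standard intersection-theoretic or deformation-theoretic computation on a ruled surface.
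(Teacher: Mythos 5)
Your plan is essentially the paper's proof: after a finite base change and normalization one obtains a $\PP^1$-bundle $B \to T$ with two disjoint sections contracted by the evaluation map $\phi : B \to \PP^n$, and since $\phi$ has two-dimensional image each contracted section has negative self-intersection, so the two sections span a negative-definite rank-two sublattice of the rank-two N\'eron--Severi group of $B$ --- impossible, since the fiber class is not contracted. This is exactly the classical Bend-and-Break computation you propose to import from \cite{BeheshtiKumar}; the detour through blowing down $\sigma_q$ and minimalizing the ruled surface is unnecessary, but the core mechanism you identify is the one the paper uses.
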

\begin{proof}
This follows from the proof of Proposition 3.2 of Debarre \cite{debarre}.

Because the result is so important to what follows, we offer a simple proof in the case where the rational curves sweep out a surface (which is all we need for our application).  Suppose the result is false.  Then, possibly after a finite base-change, we can find a family of Kontsevich stable maps parameterized by $T$ providing a counterexample.  After normalizing, we can assume $T$ is smooth.  Thus, we have a $\PP^1$-bundle $\pi:B \to T$ and a map $\phi: B \to \PP^n$ such that the restriction of $\phi$ to each fiber of $\pi$ is the Kontsevich stable map in question.

The Neron-Severi group of $B$ is two dimensional, generated by the fiber class and a divisor whose class is $\OO(1)$ on each fiber.  Since the image of $B$ is two-dimensional, any contracted curves must have negative self-intersection.  Thus, the sections of $\pi$ corresponding to the two points $p$ and $q$ must be two disjoint curves with negative self-intersection.  Thus, their classes in the Neron-Severi group must be independent, since their intersection matrix is negative definite.  However, this contradicts the Neron-Severi rank being two, since it is impossible for the entire Neron-Severi group to be contracted.
\end{proof}

Note that Proposition \ref{twoSections} can be seen to be sharp by taking the image of $\overline{\MM}_{0,0}(\PP^n,1)$ in $\overline{\MM}_{0,0}(\PP^n,e)$ under the map induced by a sufficiently general self-map of $\PP^n$ where $\OO_{\PP^n}(1)$ pulls back to $\OO_{\PP^n}(e)$.

\begin{corollary}
\label{bendBreak}
If $T$ is a complete family in $\overline{\MM}_{0,0}(\PP^n,e)$ of dimension at least $2n-1$, then $T$ contains elements with reducible domains.
\end{corollary}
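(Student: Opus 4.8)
The plan is to carve out of $T$ a one-dimensional family to which Proposition~\ref{twoSections} applies. Replacing $T$ by an irreducible component of top dimension, I may assume $T$ is irreducible of dimension at least $2n-1$; and if every map parameterized by $T$ has reducible domain there is nothing to prove, so I may further assume the general member of $T$ has irreducible domain $\PP^1$ and hence, having positive degree, is finite onto its image. Let $\cC \to T$ be the universal curve with evaluation map $\ev \colon \cC \to \PP^n$, and let $Y = \overline{\ev(\cC)} \subseteq \PP^n$ be the locus swept out by the curves of the family. Since a fixed curve in $\PP^n$ contains only finitely many subcurves of degree at most $e$ while $\dim T > 0$, the uniqueness hypothesis forces $\dim Y \geq 2$: were $\dim Y \leq 1$, the image of a member of $T$ would be constant along the irreducible $T$, and no such member has a unique image. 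Thus $2 \leq \dim Y \leq n$.

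Next I run the dimension count. Consider $\cC \times_T \cC$, of dimension $\dim T + 2 \geq 2n+1$, together with the map $h \colon \cC \times_T \cC \to \PP^n \times \PP^n$ recording the images of the two points. Its image lies in $Y \times Y$, so the closure $W$ of the image satisfies $\dim W \leq 2\dim Y \leq 2n$; moreover $W$ is irreducible and is not contained in the diagonal, since no stable map of positive degree is constant. Hence a general point $(p,q) \in W$ has $p \neq q$, and — restricting $h$ to the dense open part of $\cC \times_T \cC$ lying over the locus $U \subseteq T$ of maps with irreducible domain, which still dominates $W$ — the fiber over $(p,q)$ has dimension at least $(\dim T + 2) - \dim W \geq 1$. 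Projecting to $T$ and using that over $U$ every map is finite onto its image, so that this projection has finite fibers, I conclude that the locus $T_{p,q} \subseteq T$ of maps whose image contains both $p$ and $q$ has dimension at least $1$; and $T_{p,q}$ is closed in $T$, hence complete.

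Finally, choose a complete irreducible curve $T' \subseteq T_{p,q}$. Every map parameterized by $T'$ contains the two distinct points $p$ and $q$ in its image, so by Proposition~\ref{twoSections} it remains only to verify that the curves parameterized by $T'$ sweep out a two-dimensional family in $\PP^n$. If they did not, they would all lie in a single curve $C \subseteq \PP^n$; as $C$ has only finitely many subcurves of degree at most $e$, the general member of $T'$ would then have a fixed image, shared with a positive-dimensional family of maps in $T$, contradicting uniqueness — provided $T'$ meets the dense open locus $V \subseteq T$ of maps with unique image. That proviso is arranged when $(p,q)$ is chosen: the locus of pairs $(p,q)$ lying on the image of a common map in $V \cap U$ is already dense in $W$, so one may pick $(p,q)$ general with a point of $T_{p,q}$ in $V \cap U$ and run $T'$ through it. Proposition~\ref{twoSections} then shows that $T'$, and hence $T$, parameterizes maps with reducible domains.

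The step I expect to be the main obstacle is the last one: guaranteeing that the extracted curve $T'$ really sweeps out a surface rather than collapsing onto a single curve. This is exactly where the hypothesis of a unique general image is indispensable, and the only genuine care required is to choose the two points $(p,q)$ generically enough that the cut-down family $T_{p,q}$ still meets the unique-image locus $V$. Once that is arranged, the remaining dimension count is forced by the numerology $2n-1 = 2(n-1)+1$, namely that imposing the two point conditions lowers $\dim T$ by at most $2(n-1)$ and leaves a curve to feed into Proposition~\ref{twoSections}.
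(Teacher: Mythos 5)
Your proof is correct and follows essentially the same route as the paper's: form the two-point incidence correspondence over $T$ (your $\cC\times_T\cC$ is the paper's $Y$), note it has dimension at least $2n+1$ so its map to $\PP^n\times\PP^n$ has positive-dimensional fibers, extract a complete one-dimensional subfamily through two fixed distinct points, and feed it to Proposition~\ref{twoSections}. You simply make explicit the details the paper leaves implicit, namely that $p\neq q$ for a general pair and that the uniqueness-of-image hypothesis forces the cut-down family to sweep out a surface.
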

\begin{proof}
Consider the incidence correspondence $Y = \{ (C,f,p,q) \: | \: (C,f) \in T \: \textrm{and} \: p,q \in f(C) \}$.  Then $Y$ has dimension at least $2n+1$.  Looking at the natural map $Y \to \PP^n \times \PP^n$, we see that the general non-empty fiber has to be at least one-dimensional.  Thus, we can find a $1$-parameter subfamily passing through two distinct points.
\end{proof}

We also need a similar result for families of curves lying on a hypersurface all passing through one point:

\begin{proposition}
\label{oneSection}
Let $X$ be a hypersurface in $\PP^n$. If $T$ is a complete family in $\overline{\MM}_{0,0}(X,e)$ of dimension at least $n-1$ such that the image of each curve contains a fixed point $p$, then $T$ parameterizes a map with reducible domain.
\end{proposition}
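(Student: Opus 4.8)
The statement is parallel to Proposition \ref{twoSections}, but now the curves all lie on a hypersurface $X$ and pass through only a single fixed point $p$. The plan is to reduce to the situation of Proposition \ref{twoSections} by producing a *second* common point for free. Suppose the proposition fails; then, after a finite base change and normalization as before, we obtain a $\PP^1$-bundle $\pi : B \to T$ with $\dim T = n-1$ and a map $\phi : B \to X \subset \PP^n$ restricting to a Kontsevich stable map on each fiber, with irreducible domains, so that $\phi$ contracts the section $\sigma_p$ over $p$ but the image $\phi(B)$ is not a single curve. Because $\dim T = n-1$, the image $\phi(B)$ has dimension at most $n$, but it also contains the point $p$ and, being swept out by an $(n-1)$-dimensional family of curves through $p$, has dimension at least $2$.

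The key point is to locate a second section that $\phi$ contracts (or at least a second fixed point in all the images), so that Proposition \ref{twoSections} applies and forces reducible domains, a contradiction. To do this I would cut down: the general hyperplane $H \subset \PP^n$ through $p$ meets $\phi(B)$ in a subvariety of dimension $\dim \phi(B) - 1$, and the preimage $\phi^{-1}(H)$, after removing the contracted section $\sigma_p$, is a divisor in $B$ mapping finitely onto its image in $H$ — this divisor dominates $T$ and meets a general fiber in finitely many points. Taking $H$ to vary and using a Bertini-type/incidence argument, one arranges that a multisection of $\pi$ maps into a fixed lower-dimensional locus; iterating (or choosing $H$ cleverly through $p$ so that $\phi^{-1}(H)$ has a component which is again a section contracted to a point) yields the second contracted section $\sigma_q$ with $q \neq p$. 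Then $B$, $\pi$, $\phi$, together with the pair $(\sigma_p, \sigma_q)$, violate Proposition \ref{twoSections}.

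Alternatively — and this is probably the cleaner route — one argues directly on the Neron-Severi group of $B$ as in the proof of Proposition \ref{twoSections}. Since $\phi(B)$ has dimension $\geq 2$, every $\phi$-contracted curve has negative self-intersection; in particular $\sigma_p^2 < 0$. The Neron-Severi group of the ruled surface $B$ has rank $2$, spanned by a fiber $F$ and $\sigma_p$; since $F^2 = 0$ and $\sigma_p^2 < 0$, any second contracted irreducible curve $D$ must, for its class to pair negatively with itself and be independent, force a negative-definite intersection form on a rank-$2$ sublattice containing $F$ — which is impossible because $F^2 = 0$. Hence the *only* contracted curves are multiples of $\sigma_p$ (and possibly components of singular fibers mapping to the curves in $X$), and this rigidity, combined with $\dim T = n-1$ being just large enough, should be pushed to a contradiction by a dimension count: the family of such $B \to X$ sweeping out only a surface through $p$ cannot have $(n-1)$ moduli unless some fiber degenerates.

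**Main obstacle.** The crux, and where I expect to spend the most effort, is producing the second fixed point $q$ (equivalently, ruling out that $p$ is the *unique* contracted section). Unlike in Proposition \ref{twoSections}, where two distinct fixed points are given by hypothesis, here the geometry of $X$ and the dimension bound $n-1$ must be leveraged to manufacture one — this is precisely the role played by $X$ being a hypersurface (so that curves on it are constrained) rather than all of $\PP^n$. I would expect the argument to require a careful Bertini/genericity analysis of the hyperplane sections $\phi^{-1}(H)$ through $p$, together with the observation that $\dim T = n-1$ leaves no room for the multisections produced this way to move, thereby pinning them to a point.
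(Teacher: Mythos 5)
You correctly identify the target --- manufacture a second common point $q$ and then invoke Proposition \ref{twoSections} --- but you never actually produce $q$, and both mechanisms you sketch for doing so are off track. The paper's argument is a one-line dimension count on the incidence correspondence $Y = \{(C,f,q) \mid (C,f)\in T,\ q\in f(C)\}$: since $\dim T \geq n-1$ and each curve is one-dimensional, $\dim Y \geq n$, while the evaluation map sends $Y$ into $X$, which has dimension only $n-1$. Hence the fiber over a general point $q\in X$ (which may be taken distinct from $p$) is at least one-dimensional, giving a one-parameter subfamily of $T$ all of whose members pass through both $p$ and $q$; since distinct members of $T$ have distinct images, this subfamily sweeps out a surface, and Proposition \ref{twoSections} applies. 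The only role of $X$ being a hypersurface is that $\dim X = n-1 < \dim Y$ --- it is not that ``curves on $X$ are constrained,'' as you suggest, and no Bertini analysis of hyperplane sections or contracted multisections is needed.

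Your alternative route has a more serious defect: you work with a $\PP^1$-bundle $B \to T$ over the full $(n-1)$-dimensional base and then reason about ``the Neron-Severi group of the ruled surface $B$'' of rank $2$. But $B$ has dimension $n$, not $2$; the negative-definiteness argument in Proposition \ref{twoSections} is specific to a ruled \emph{surface} over a one-dimensional base, so it cannot be applied until you have already cut $T$ down to a curve passing through two fixed points --- which is exactly the step your proposal leaves open. As written, the proposal is a plan with the decisive step missing rather than a proof.
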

\begin{proof}
Consider the incidence correspondence $Y = \{ (C,f,q) \: | \: (C,f) \in T \: \textrm{ and } \: q \in f(C) \}$.  Then $Y$ has dimension at least $n$.  Looking at the natural map $Y \to X$, we see that the general fiber has to be at least one-dimensional.  Thus, we can find a $1$-parameter subfamily passing through $p$ and another point $q$.
\end{proof}

\section{The case of small degree}
\label{HRSsummary}
Because the ideas in Harris-Roth-Starr \cite{HRS} are so central to our approach, we provide a sketch of the proof of a main result from Harris-Roth-Starr.

\begin{theorem}[Harris-Roth-Starr]
Suppose $d \leq \frac{n+1}{2}$.  Then if $X$ is a general degree $d$ hypersurface in $\PP^n$, the space of degree $e$ rational curves in $X$ through an arbitrary point $p$ has dimension $e(n-d+1)-2$.
\end{theorem}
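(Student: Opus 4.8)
The plan is to prove, by induction on $e$, the formally stronger statement that for general $X$ the evaluation morphism $\ev\colon\overline{\MM}_{0,1}(X,e)\to X$ is flat with fibers of pure dimension $e(n-d+1)-2$, that is, Conjecture \ref{flatConj} in degree $e$; the assertion about curves through an arbitrary point $p$ is then the statement about $\ev^{-1}(p)$, equivalently about its image in $\overline{\MM}_{0,0}(X,e)$. One inequality is for free: since $\overline{\MM}_{0,1}(X,e)$ is cut out of the irreducible space $\overline{\MM}_{0,1}(\PP^n,e)$ by a section of a rank $ed+1$ bundle whose restriction to $\ev^{-1}(p)$ lands in a corank one subbundle --- the vanishing at the marked point being forced by $p\in X$ --- every nonempty component of $\ev^{-1}(p)$ has dimension at least $e(n+1)-2-ed=e(n-d+1)-2$. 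Nonemptiness for every $p$ holds (inductively, attach a line through $p$ to a lower-degree curve). So the real work is the reverse inequality, carried out by a bend-and-break degeneration fed by the inductive hypothesis.

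For the base case $e=1$ I would show that for general $X$ and \emph{every} $p\in X$, the lines of $X$ through $p$ form a scheme of pure dimension $n-1-d$, which gives flatness of $\ev$ over the smooth base. These lines are the zero scheme of a section of a globally generated rank $d$ bundle on the $\PP^{n-1}$ of lines through $p$ --- the section being $F$ restricted to the varying line, with its constant term dropped because $p\in X$ --- so for \emph{general} $(p,F)$ the scheme has the expected dimension. To upgrade this to arbitrary $p$ one runs a parameter count, using Lemma \ref{kdimConds} to bound how many conditions it is for a degree $d$ form to contain prescribed families of lines, and concludes that the locus of pairs $(p,F)$ where the dimension jumps has image of positive codimension among hypersurfaces; here one needs $d\leq n-1$.

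For the inductive step, assume the result in all degrees below $e$ and fix $p\in X$. Two dimension estimates, both consequences of the inductive hypothesis, govern the degenerate loci in $\ev^{-1}(p)$: (i) each boundary stratum over $p$ parameterizes reducible maps $C_1\cup C_2\to X$ with $p$ on one component, so it is dominated by a fiber product of spaces $\overline{\MM}_{0,k}(X,e_i)$ with $e_i<e$ over copies of $X$ and, using flatness of the lower-degree evaluation maps, has dimension at most $e(n-d+1)-3$; (ii) the locus of maps in $\ev^{-1}(p)$ that are multiple covers of lower-degree curves has dimension at most $e(n-d+1)-2$, which uses $n-d+1\geq 2$ (valid since $d\leq\frac{n+1}{2}$ forces $d\leq n-1$). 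Now let $Z$ be a component of $\ev^{-1}(p)$ and suppose for contradiction $\dim Z\geq e(n-d+1)-1$. By (i) and (ii), $Z$ is contained in neither the boundary nor the multiple-cover locus, so its general member maps birationally onto an irreducible rational curve through $p$; hence its image $T$ under the forgetful morphism to $\overline{\MM}_{0,0}(X,e)$ is a complete family of dimension $\dim Z$ (the forgetful morphism is generically finite on $\ev^{-1}(p)$) whose general member is birational onto its image and passes through $p$, so the hypotheses of Proposition \ref{oneSection} are met once $\dim Z\geq n-1$. That inequality is exactly $e(n-d+1)-2\geq n-1$, i.e. $d\leq\frac{n+1}{2}$ for $e\geq 2$; granting it, Proposition \ref{oneSection} shows that $T$, hence $Z$, meets the boundary, and since the boundary is a divisor, $Z\cap\partial\overline{\MM}_{0,1}(X,e)$ has dimension at least $\dim Z-1\geq e(n-d+1)-2$, contradicting (i). Therefore $\dim\ev^{-1}(p)=e(n-d+1)-2$ for every $p$; then $\overline{\MM}_{0,1}(X,e)$ is equidimensional of the expected dimension, hence a local complete intersection, and flatness over the smooth base $X$ follows by miracle flatness.

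The key obstacle, and the source of the bound $d\leq\frac{n+1}{2}$, is that the induction only propagates when \emph{every} component of $\ev^{-1}(p)$ is large enough to feed into Proposition \ref{oneSection}, namely of dimension at least $n-1$; as the a priori lower bound on such a component is only $e(n-d+1)-2$, this forces $e(n-d+1)-2\geq n-1$, which for $e\geq 2$ is precisely $d\leq\frac{n+1}{2}$. Once $d$ exceeds this, some component of $\ev^{-1}(p)$ is too small to break and the argument stalls --- exactly the gap addressed in the remainder of the paper. The other delicate points are confined to the base case: making the $e=1$ jumping-locus estimate uniform in $p$, and the nonemptiness of $\ev^{-1}(p)$ for every $p$.
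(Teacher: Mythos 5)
Your proposal follows essentially the same route as the paper's sketch: induct on $e$, use the inductive hypothesis to bound the reducible locus over $p$ by $e(n-d+1)-3$, and apply the one-point bend-and-break (Proposition \ref{oneSection}) to force every component through $p$ to meet the boundary divisor, with the numerical requirement $e(n-d+1)-2\geq n-1$ yielding $d\leq\frac{n+1}{2}$ at the worst case $e=2$. The extra care you take with multiple covers and with making the $e=1$ count uniform in $p$ fills in details the paper's sketch leaves implicit; the argument is correct.
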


\begin{proof}
(sketch) It follows from Proposition \ref{1levelness} that a general hypersurface has a $1 \cdot (n-d+1) - 2 = n-d-1$ dimensional family of lines through every point.  Now we use induction.  Suppose we know the result for all curves of degree smaller than $e$.  Then the space of reducible curves through a point $p$ with components of degrees $e_1$ and $e_2$ has dimension $e_1(n-d+1)-2 + 1 + e_2(n-d+1) - 2 = e(n-d+1)-3$.  Thus, it remains to show that every component of rational degree $e$ curves through $p$ contains reducible curves, since we know that the space of reducible curves is codimension at most $1$ in the space of all rational curves (see \cite{FP} for more information about the boundary divisor in Kontsevich space).

It follows from Proposition \ref{oneSection} that any $(n-1)$-dimensional family of curves contained in $X$ passing through $p$ must contain reducible curves.  Thus, we will have the result if
\[ e(n-d+1)-2 \geq n-1 \]
for all $e \geq 2$.  This simplifies to
\[ ed \leq n(e-1) + e - 1 \]
or
\[ d \leq \frac{(n+1)(e-1)}{e} .\]

The right-hand side is increasing in $e$, so if $d \leq \frac{n+1}{2}$ we have our result.
\end{proof}

From the proof, we see that if $\ev: \overline{\MM}_{0,1}(X,k) \to X$ is flat with expected-dimensional fibers for $1 \leq k \leq e-1$ but not for $k=e$, then $e(n-d+1)-2 \leq n-1$, or $e \leq \frac{n+1}{n-d+1}$.  That is, we need only check flatness for degrees up to $\frac{n+1}{n-d+1}$.  Harris-Roth-Starr call $\lfloor \frac{n+1}{n-d+1} \rfloor$ the threshold degree.  Note that as a Corollary of the proof, it follows that every component of degree $e$ curves contains curves with reducible domains.

Harris-Roth-Starr also prove irreducibility of the space of rational curves, and we will need this result as well, but we will describe it further in the next section, after we have talked about $e$-layeredness.

\section{$e$-levelness and $e$-layeredness}
This section is about two related concepts that underlie the ideas behind our proofs: $e$-levelness and $e$-layeredness.  Roughly speaking, an $e$-level point of a hypersurface has the expected dimension of degree-up-to-$e$ rational curves through it, and an $e$-layered point is such that every degree-up-to-$e$ rational curve through it specializes to a reducible curve.  The definitions are new, but they are related to ideas in \cite{HRS}.  Our main innovation is extending these ideas to singular hypersurfaces, so that we can try to bound the codimensions of the loci of hypersurfaces which are not $e$-layered.

\begin{definition}
A point $p \in X$ is \emph{$e$-level} if:
\begin{itemize}
\item $p \in X_{\smooth}$ and the space of rational curves in $X$ through $p$ has dimension at most $e(n-d+1)-2$ or
\item $p \in X_{\sing}$ and the space of rational curves in $X$ through $p$ has dimension at most $e(n-d+1)-1$.
\end{itemize}
A point $p \in X$ is \emph{$e$-sharp} if it is not $e$-level.
\end{definition}

The space $T$ of degree $e$ rational curves in $X$ through $p$ has the \emph{expected dimension} if $p$ is singular and $\dim T = e(n-d+1)-1$ or $p$ is smooth and $\dim T = e(n-d+1)-2$.  The reason that the condition is different for singular points is that through a singular point, there will always be at least a $(e(n-d+1)-1)$-dimensional family of rational curves, as we can see from writing out how many conditions it is for an explicit map from $\PP^1$ to $\PP^n$ to lie in $X$.  Points are $e$-level if they have the expected dimension of degree $e$ rational curves through them.

\begin{definition}
A hypersurface $X$ is \emph{$e$-level} if for every $k \leq e$ the following two conditions hold:
\begin{itemize}
\item There are no rational curves of degree $k$ contained in $X_{\sing}$.
\item Every point of $X$ is $k$-level.
\end{itemize}
A hypersurface is \emph{$e$-sharp} if it is not $e$-level.
\end{definition}

Define
\[ \Phi = \{ (p,X) | p \in X \}  \subset \PP^n \times \PP^N .\]
Let $\Phi_{\smooth}$ and $\Phi_{\sing}$ be the respectively open and closed subsets given by
\[ \Phi_{\smooth} = \{ (p,X) | p \in X \textrm{ such that $p \in X_{\smooth}$} \}  \subset \Phi \]
and
\[ \Phi_{\sing} = \{ (p,X) | p \in X \textrm{ such that $p \in X_{\sing}$} \}  \subset \Phi .\]

Let $\Phi_{e,\shrp} \subset \Phi$ be the locus of pairs $(p,X)$ where $p$ is an $e$-sharp point of $X$.  Notice that $\Phi_{e,\shrp}$ is not closed in $\Phi$.  To see this, consider the family of cubics in $\PP^5$ cut out by $f_t=tx_0^2x_1+x_0x_1x_2+x_1^3+x_2^3+x_3^3+x_4^3+x_5^3$.  For all $t$, there is a $2$-dimensional family of lines through the point $p = [1:0:0:0:0:0]$.  For $t \neq 0$, $X_t = V(f_t)$ is smooth $p$, which means that $p$ is a $1$-sharp point of $X_t$.  However, for $t = 0$, $X_0$ is singular at $p$, which means $p$ is a $1$-level point of $X_0$.  Although $\Phi_{e,\shrp}$ is not closed in $\Phi$, it is the case that $\Phi_{e,\shrp} \cap \Phi_{\smooth}$ is closed in $\Phi_{\smooth}$ and $\Phi_{e,\shrp} \cap \Phi_{\sing}$ is closed in $\Phi_{\sing}$ (which means that it is also closed in $\Phi$, since $\Phi_{\sing}$ is closed in $\Phi$).

\begin{definition}
A point $p \in X$ is \emph{$e$-layered} if:
\begin{itemize}
\item It is $1$-level.
\item For every $k \leq e$, every irreducible component parameterizing degree $k$ rational curves through $p$ contains reducibles.
\end{itemize}
A point $p \in X$ is \emph{$e$-uneven} if it is not $e$-layered.
\end{definition}

As before, we define $\Phi_{e,\text{layered}}$ to be $\{ (p,X) | \: p \text{ is an $e$-layered point of $X$} \}$ and $\Phi_{e,\text{uneven}}$ to be $\{ (p,X) | \: p \text{ is an $e$-uneven point of $X$} \}$.  The definition of $e$-layered hypersurfaces is analogous to that of $e$-level hypersurfaces.

\begin{definition}
A hypersurface $X$ is \emph{$e$-layered} if:
\begin{itemize}
\item All of its points are $e$-layered.
\item It contains no rational curves of degree less than or equal to $e$ in its singular locus.
\end{itemize}
\end{definition}

Proposition \ref{expRedDim} will allow us to relate layeredness and levelness.  We wish to specialize arbitrary rational curves to reducible curves in order to get a bound on the dimension of the space of curves through a point.  The specialization will not be useful unless we know that the space of reducible curves has sufficiently small dimension.  The notion of $e$-levelness is exactly tailored so that this is the case.

\begin{proposition}
\label{expRedDim}
Let $X$ be an $(e-1)$-level degree $d$ hypersurface.  Denote by $\ev: \overline{\MM}_{0,1}(X,e) \to X$ the evaluation morphism.  Then if $p$ is a point of $X$, the subspace of reducible curves in $\operatorname{ev}^{-1}(p)$ has dimension at most $e(n-d+1)-3$ if $p \in X_{\smooth}$ and at most $e(n-d+1)-2$ if $p \in X_{\sing}$.
\end{proposition}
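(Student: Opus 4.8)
The plan is to cover the locus of reducible maps in $\ev^{-1}(p)$ by the images of the boundary gluing maps of $\overline{\MM}_{0,1}(X,e)$ and to bound each such image by a dimension count fueled by the hypothesis that $X$ is $(e-1)$-level. (We may assume $e\ge 2$; for $e=1$ there are no reducible stable maps in $\overline{\MM}_{0,1}(X,1)$, since a contracted component would carry at most two special points.) The first step is the combinatorial reduction. Given a reducible stable map $(C,x,f)$ of degree $e$ with $f(x)=p$, normalize $C$ at any one node; since $C$ is a tree of $\PP^1$'s this disconnects it into two connected genus-$0$ curves $C_1\ni x$ and $C_2$, and the node becomes a marked point $\star$ on each side. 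Normalizing at a node replaces it by a marking, so every component keeps its special-point count; hence $(C_1,x,\star,f|_{C_1})$ and $(C_2,\star,f|_{C_2})$ are again stable, and neither side can be entirely contracted (a genus-$0$ contracted stable map with at most two markings is unstable). Writing $k:=\deg f|_{C_1}$ we get $1\le k\le e-1$, and $(C,x,f)$ lies in the image of the gluing map $g_k\colon \overline{\MM}_{0,\{x,\star\}}(X,k)\times_X\overline{\MM}_{0,\{\star\}}(X,e-k)\to\overline{\MM}_{0,1}(X,e)$, the fiber product being over the two $\star$-evaluations. Since $\ev\circ g_k$ equals $\ev_x$ on the first factor, $g_k^{-1}(\ev^{-1}(p))=\mathcal Z_k$ where $\mathcal Z_k:=\bigl(\ev_x^{-1}(p)\subset\overline{\MM}_{0,\{x,\star\}}(X,k)\bigr)\times_X\overline{\MM}_{0,\{\star\}}(X,e-k)$, so it suffices to bound $\dim\mathcal Z_k$ for each $k$.

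With this in hand the dimension count runs as follows. The forgetful map $\overline{\MM}_{0,\{x,\star\}}(X,k)\to\overline{\MM}_{0,1}(X,k)$ is the universal curve, of relative dimension $1$, so $\ev_x^{-1}(p)\subset\overline{\MM}_{0,\{x,\star\}}(X,k)$ has dimension $1+\dim\bigl(\ev_x^{-1}(p)\subset\overline{\MM}_{0,1}(X,k)\bigr)$, which by $k$-levelness ($k\le e-1$) is at most $k(n-d+1)-1$ if $p\in X_{\smooth}$ and at most $k(n-d+1)$ if $p\in X_{\sing}$. Let $V$ be an irreducible component of $\mathcal Z_k$ and let $q\in X$ be the image of its generic point under the gluing evaluation $\ev_\star$. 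A general fiber of the projection $V\to\ev_x^{-1}(p)\subset\overline{\MM}_{0,\{x,\star\}}(X,k)$ sits inside $\ev_\star^{-1}(q)\subset\overline{\MM}_{0,1}(X,e-k)$, which by $(e-k)$-levelness has dimension at most $(e-k)(n-d+1)-2$ if $q\in X_{\smooth}$ and at most $(e-k)(n-d+1)-1$ if $q\in X_{\sing}$. If $q\in X_{\smooth}$, adding the two bounds gives $\dim V\le e(n-d+1)-3$ when $p\in X_{\smooth}$ and $\dim V\le e(n-d+1)-2$ when $p\in X_{\sing}$, as required.

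It remains to handle the case where the gluing point is forced into the singular locus, i.e.\ $\ev_\star(V)\subseteq X_{\sing}$; this is precisely where the condition that $X$ (as a hypersurface) is $(e-1)$-level — that it contains no rational curves of degree $\le e-1$ in $X_{\sing}$ — comes in. Here the projection of $V$ to $\overline{\MM}_{0,\{x,\star\}}(X,k)$ lands in $\ev_x^{-1}(p)\cap\ev_\star^{-1}(X_{\sing})$, and I would argue this projection picks up one fewer dimension than the generic bound on $\ev_x^{-1}(p)\subset\overline{\MM}_{0,\{x,\star\}}(X,k)$: for a fixed degree-$k$ map $(C_1,x,f_1)$ through $p$, the image $f_1(C_1)$ is not contained in $X_{\sing}$ — otherwise a positive-degree component of $C_1$ would be a rational curve of degree $\le k\le e-1$ lying in $X_{\sing}$, contradicting $(e-1)$-levelness — so $f_1(C_1)\cap X_{\sing}$ is finite and there are only finitely many admissible positions for $\star$. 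Thus the projection has dimension at most $\dim\bigl(\ev_x^{-1}(p)\subset\overline{\MM}_{0,1}(X,k)\bigr)$, i.e.\ at most $k(n-d+1)-2$ (resp.\ $k(n-d+1)-1$) for $p$ smooth (resp.\ singular); combining this with the singular fiber bound $(e-k)(n-d+1)-1$ again yields $\dim V\le e(n-d+1)-3$ for $p\in X_{\smooth}$ and $\dim V\le e(n-d+1)-2$ for $p\in X_{\sing}$. Taking the maximum over $k$ and over the components of all the $\mathcal Z_k$ completes the proof.

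The step I expect to be the main obstacle is the "finitely many admissible positions for $\star$" claim in the last paragraph: it must be checked against domains whose normalization has a component contracted to a point of $X_{\sing}$, on which $\star$ can move in a positive-dimensional family. Handling this cleanly will require stratifying $\ev_x^{-1}(p)\cap\ev_\star^{-1}(X_{\sing})$ by the combinatorial type of the domain and bounding each stratum using $k_i$-levelness on the non-contracted branches together with the dimension of the moduli of the contracted component — routine bookkeeping, but the one place where genuine care is needed. Everything else is the dimension count above.
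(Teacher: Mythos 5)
Your decomposition and your main dimension count are exactly the paper's: cover the boundary by the gluing maps from $\overline{\MM}_{0,\{x,\star\}}(X,k)\times_X\overline{\MM}_{0,\{\star\}}(X,e-k)$, pick up $+1$ from the universal curve, and feed in the levelness bounds on $\ev^{-1}$ over smooth and singular points together with the absence of degree $\le e-1$ rational curves in $X_{\sing}$. The arithmetic in your second paragraph is correct. But the case you flag at the end --- $\ev_\star(V)\subseteq X_{\sing}$ --- is a genuine gap, not routine bookkeeping, and it is exactly the point where the paper's proof contains its one real idea. Your ``finitely many admissible positions for $\star$'' claim fails precisely when the domain $C_1$ of the degree-$k$ piece is reducible with a component contracted to a point of $X_{\sing}$: then $\star$ moves in a $1$-dimensional family with $\ev_\star(\star)$ constant in $X_{\sing}$, and your projection does not drop a dimension. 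The stratification you propose would force you to control configurations of branches of $f_1(C_1)$ meeting at a common singular point, which is not obviously cheaper than the proposition you are trying to prove.

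The paper closes this by running a strong induction on $e$ on the proposition itself. The inductive hypothesis, applied to $\ev_x^{-1}(p)\subset\overline{\MM}_{0,\{x\}}(X,k)$ with $k\le e-1$, says the locus where $C_1$ is reducible has dimension at most $k(n-d+1)-3$ (resp.\ $k(n-d+1)-2$ for $p$ singular) --- one less than the levelness bound. Over that smaller locus you may use the worst-case fiber bound $(e-k)(n-d+1)-1$ for every position of $\star$ and still land at $e(n-d+1)-3$ (resp.\ $-2$); and on the complementary locus $C_1$ is irreducible and non-contracted, so $f_1(C_1)\not\subset X_{\sing}$ forces the positions of $\star$ mapping into $X_{\sing}$ to be finite, which is your clean case. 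Replacing your last paragraph with this induction makes the argument complete; as written, the proof is not.
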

\begin{proof}
We use strong induction on $e.$ It is obvious for $e=1$, as there are no reducible curves in $\overline{\mathcal{M}}_{0,1}(X,1).$

Denote by $\overline{\mathcal{M}}_{0,\{a\}}(X,e)$ the Kontsevich space of stable degree $e$ maps from rational curves with a marked point $a$.  The subspace of reducible curves in $\overline{\mathcal{M}}_{0,\{a\}}(X,e)$ is covered by maps from $\BB_{e_1} = \overline{\mathcal{M}}_{0,\{a,b\}}(X,e_1)\times_X \overline{\mathcal{M}}_{0,\{c\}}(X,e_2)$, where $e_1+e_2=e$, $e_1, e_2 \geq 1$, and the maps $\operatorname{ev}_b: \overline{\mathcal{M}}_{0,\{a,b\}}(X, e_1) \to X$ and $\operatorname{ev}_c: \overline{\mathcal{M}}_{0,\{c\}}(X,e_2) \to X$ are used to define the fiber products. The map from $\overline{\mathcal{M}}_{0,\{a,b\}}(X,e_1)\times_X\overline{\mathcal{M}}_{0,\{c\}}(X,e_2)$ to $\overline{\mathcal{M}}_{0,\{a\}}(X,e)$ is defined by gluing the domain curves together along $b$ and $c$ (see \cite{behrendManin} for details on how the gluing map works). The marked point $a$ of the first curve becomes the point $a$ of the resulting curve.  Let $\pr_1$ and $pr_2$ be the projection maps of $\BB_{e_1}$ onto the first and second components.

Fix a point $p \in X$, and write $Y = (\ev_a \circ \pr_1)^{-1}(p) \subset \BB_{e_1}$.  We are thus reduced to bounding the dimension of $Y$.  Define $Z = \ev_a^{-1}(p) \subset \overline{\MM}_{0,\{a\}}(X,e_1)$, and $Z' = \ev_a^{-1}(p) \subset \overline{\MM}_{0,\{a,b\}}(X,e_1)$, so that we have a natural sequence of maps $Y \to Z' \to Z$ given by $\pr_1: Y \to Z'$ and the forgetful map $\pi: Z' \to Z$ which forgets the point $b$.  

\begin{center}
$\begin{CD}
Y       @>>>  \BB_{e_1} @>>> \overline{\MM}_{0,\{a\}}(X,e)\\
@VVV          @VV\pr_1V\\
Z'      @>>>  \overline{\MM}_{0,\{a,b\}}(X,e_1)\\
@VVV          @VV\pi V\\
Z       @>>>  \overline{\MM}_{0,\{a\}}(X,e_1)\\
@VVV          @VV\ev_a V\\
\Spec{k}@>>p> X
\end{CD}$
\end{center}

Given a tuple $(f,C,p_a) \in Z \subset \overline{\MM}_{0,\{a\}}(X,e_1)$, we wish to analyze the fibers of $\pr_1 \circ \pi$.  The fibers of $\pi$ are all $1$-dimensional, and the fibers of $\pr_1$ are all at most $e_2(n-d+1)-1$ dimensional by $(e-1)$-levelness.  If $C$ is irreducible, then since there are no degree at most $e-1$ rational curves in $X_{\sing}$, the general fiber of $\pr_1$ over a point $(f,C,p_a,p_b) \in Z'$ has dimension $e_2(n-d+1)-2$.  By induction, we know that for a general $(f,C,p_a) \in Z$ with $f(p_a) = p$, $C$ is irreducible.  Therefore, $\dim Y = \dim Z + 1 + e_2(n-d+1) - 2 = \dim Z + e_2(n-d+1)-1$.

Putting it all together, by $e$-levelness, the dimension of $Z$ is at most $e_1(n-d+1)-2$ if $p \in X_{\smooth}$, and $e_1(n-d+1)-1$ if $p \in X_{\sing}$.  Thus, the dimension of $Y$ is $e_1(n-d+1) - 2 + e_2(n-d-1) - 1 = e(n-d+1)-3$ if $p \in X_{\smooth}$ or at most $e_1(n-d+1)-1 + e_2(n-d+1) - 1 = e(n-d+1)-2$ if $p \in X_{\sing}$, as desired.

\end{proof}

Since reducible curves are codimension at most one in $\overline{\MM}_{0,1}(X,e)$, we obtain a corollary.

\begin{corollary}
If $X$ is $e$-layered, then $X$ is $e$-level.
\end{corollary}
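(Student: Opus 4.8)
The plan is to induct on $e$, the substance being the passage from $(e-1)$-levelness to $e$-levelness via Proposition~\ref{expRedDim}. First unwind the definitions: to say $X$ is $e$-level is to say that (i) $X_{\sing}$ contains no rational curve of degree $\le e$, and (ii) $X$ has no $k$-sharp point for every $k \le e$. Clause (i) is part of the definition of an $e$-layered hypersurface, so it comes for free; and since an $e$-layered hypersurface is visibly $(e-1)$-layered, the inductive hypothesis gives clause (ii) for $k \le e-1$. Thus it suffices to show that every point $p$ of an $e$-layered hypersurface $X$ is $e$-level. The base case $e=1$ is immediate: a $1$-layered point is by definition $1$-level, and a $1$-layered hypersurface has no line in its singular locus.

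For the inductive step, fix $p \in X$ and let $T = \ev^{-1}(p) \subset \overline{\MM}_{0,1}(X,e)$; we must show $\dim T \le e(n-d+1)-2$ when $p \in X_{\smooth}$ and $\dim T \le e(n-d+1)-1$ when $p \in X_{\sing}$. Since $X$ is $(e-1)$-level by induction, Proposition~\ref{expRedDim} bounds the locus $R \subset T$ of reducible maps: $\dim R \le e(n-d+1)-3$ in the smooth case and $\dim R \le e(n-d+1)-2$ in the singular case. Now let $W$ be an irreducible component of $T$. If $W \subset R$, its dimension is already under the desired bound. Otherwise the general member of $W$ is an irreducible curve, so $W \cap R$ is a proper closed subset of $W$; but $p$ is an $e$-layered point, so the component $W$ of the space of degree-$e$ rational curves through $p$ must contain a reducible curve, i.e.\ $W \cap R \ne \emptyset$. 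The reducible locus in $\overline{\MM}_{0,1}(\PP^n,e)$ is its boundary, an effective Cartier divisor on a smooth Deligne--Mumford stack, and $W \cap R$ is the trace of this divisor on $W$; being locally cut out by one equation and being neither empty nor all of the irreducible $W$, it has pure codimension one in $W$. Hence $\dim W = \dim(W \cap R) + 1 \le \dim R + 1$, which is again within the required bound. Maximizing over the components $W$ shows $p$ is $e$-level; hence $X$ has no $e$-sharp point, and with clause (i) and the inductive hypothesis we conclude $X$ is $e$-level.

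The one point requiring care is the codimension statement for the reducible locus inside a component $W$: we need the reducible maps, which a priori form only some closed subset of $W$, to cut out a divisor there, and this is precisely where the Cartier property of the boundary of $\overline{\MM}_{0,1}(\PP^n,e)$ (together with Krull's principal ideal theorem) is used, rather than a closed subset of unknown codimension. Everything else is bookkeeping with the definitions of $e$-level and $e$-layered. If one prefers to avoid stacky language, one passes to Vistoli's finite cover, where the boundary pulls back to an honest Cartier divisor, without changing any of the dimension counts.
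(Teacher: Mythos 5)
Your proof is correct and is exactly the argument the paper leaves implicit: the corollary is stated without proof immediately after Proposition~\ref{expRedDim}, and the intended reasoning is precisely your combination of induction on $e$, the bound on the reducible locus from that proposition, and the fact that the boundary of the Kontsevich space is a divisor so that each component meeting it loses exactly one dimension (the same device the paper uses explicitly in the Harris--Roth--Starr sketch and in Corollary~\ref{lotsaCurvesCor}). Your added care about the Cartier property of the boundary (or passing to Vistoli's finite cover) is a legitimate fleshing-out of a point the paper takes for granted.
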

\begin{proof}
The result follows from a simple induction argument.  We see that $X$ is $1$-level by definition.  Now suppose $X$ is $(e-1)$-level.  Then the space of reducibles through an arbitrary point of $X$ will have dimension at most $e(n-d+1)-3$ if $p \in X_{\smooth}$ and $e(n-d+1)-2$ if $p \in X_{\sing}$.  Since $X$ is $e$-layered, every component of degree $e$ curves through $p$ will contain reducibles, which means that it will have dimension at most $e(n-d+1)-2$ if $p \in X_{\smooth}$ or $e(n-d+1)-1$ if $p \in X_{\sing}$.  Thus, $X$ is $e$-level.
\end{proof}

It follows that if $p \in X$ is $(e-1)$-level, but $e$-sharp, than $p$ must be $e$-uneven.  

\begin{corollary}
\label{closedUneven}
Let $(X_t,p_t,B_t)$ be an irreducible family of triples where $p_t$ is a point of the $(e-1)$-level hypersurface $X_t$ and $B_t$ is a component of the family of curves in $X_t$ through $p_t$.  For $t \neq 0$, suppose $B_t$ contains no reducibles, (which implies that $p_t$ is an $e$-uneven point of $X_t$).  Then $B_0$ contains no reducible curves.  In particular, the space $\Phi_{e,\text{uneven}} \cap \Phi_{e-1,\text{level}}$ is closed in $\Phi_{e-1,\text{level}}$.
\end{corollary}
\begin{proof}
To get a contradiction, suppose we have a $1$-parameter family of triples $(p_t,X_t,B_t)$ where $p_t$ is an $(e-1)$-level point of $X_t$, such that $B_t$ contains no reducibles for $t \neq 0$ but $B_0$ does contain reducibles.  Since $(p_0,X_0)$ is $(e-1)$-level, this means that the family of reducible curves in $X_t$ passing through $p_t$ is codimension at least $2$ in the family of all degree curves in $X_t$ passing through $p_t$.  This is a contradiction.
\end{proof}

Using Proposition \ref{expRedDim}, we immediately obtain a generalization of the result of Harris-Roth-Starr.  The idea behind this corollary is related to ideas found in Harris, Roth, and Starr's treatment of families of curves on smooth cubic hypersurfaces \cite{HRS2}.

\begin{corollary}
\label{lotsaCurvesCor}
For every triple $(n,d,e)$ with $n \geq d+2$ and $(e+1)(n-d+1)-2 \geq n-1$, for every $e$-level, degree $d$ hypersurface $X$ in $\PP^n$ which has no rational curves of any degree in its singular locus, then $X$ is $k$-level for all $k$.  It follows that if $X$ contains no rational curves in its singular locus and is $\lfloor \frac{n+1}{n-d+1} \rfloor$-level, then $X$ is $k$-level for all $k$.
\end{corollary}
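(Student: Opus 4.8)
The plan is to induct on $k$, the cases $k \le e$ being exactly the hypothesis that $X$ is $e$-level. For the inductive step, fix $k \ge e+1$ and assume $X$ is $(k-1)$-level; since $X$ has no rational curves of any degree in $X_{\sing}$, it remains only to show that $X$ has no $k$-sharp points, i.e.\ that for every $p \in X$, each irreducible component $W$ of the space $Z_p = \ev^{-1}(p) \subset \overline{\MM}_{0,1}(X,k)$ of degree-$k$ rational curves through $p$ satisfies $\dim W \le k(n-d+1)-2$ if $p \in X_{\smooth}$ and $\dim W \le k(n-d+1)-1$ if $p \in X_{\sing}$. The first thing I would note is that $n \ge d+2$ forces $n-d+1 \ge 3$, so the hypothesis $(e+1)(n-d+1)-2 \ge n-1$ and $k \ge e+1$ give $k(n-d+1)-2 \ge n-1$; hence any component with $\dim W \le n-2$ already satisfies the bound, and we may assume $\dim W \ge n-1$.

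For such a $W$ I would split into cases according to the general stable map $f$ it parametrizes. \textbf{(i)} If $f$ has reducible domain, then $W$ lies in the locus of reducible curves in $Z_p$, which by Proposition \ref{expRedDim} (applied to the $(k-1)$-level hypersurface $X$) has dimension at most $k(n-d+1)-3$, resp.\ $k(n-d+1)-2$ --- below the target. \textbf{(ii)} If $f$ has irreducible domain but is a degree-$m$ multiple cover ($m \ge 2$) of a rational curve of degree $k' = k/m$, then the forgetful map $\overline{\MM}_{0,1}(X,k) \to \overline{\MM}_{0,0}(X,k)$, restricted to $Z_p$, has finite fibers and carries $W$ into the closure $\Sigma$ of the locus of $m$-fold covers of degree-$k'$ rational curves through $p$; since $\Sigma$ fibers over the family of such image curves --- of dimension $\le k'(n-d+1)-1$ by $(k-1)$-levelness --- with $(2m-2)$-dimensional fibers, we get $\dim W \le k'(n-d+1)-1+2m-2$, and a short computation using $k'\ge 1$ and $n-d+1\ge 3$ shows this is at most $k(n-d+1)-2$, resp.\ $k(n-d+1)-1$. \textbf{(iii)} If $f$ has irreducible domain and is birational onto its image, I would pass to the image family $W^\circ \subset \overline{\MM}_{0,0}(X,k)$: it is complete, of the same dimension $\ge n-1$, its general member has unique image, and all its members pass through $p$, so Proposition \ref{oneSection} produces an element of $W$ with reducible domain. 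Since the locus of reducible maps is a Cartier divisor of $\overline{\MM}_{0,1}(\PP^n,k)$ and $W$ is not contained in it, $W$ meets it in pure codimension $1$, and this intersection lies in the reducible locus of $Z_p$; so Proposition \ref{expRedDim} again gives $\dim W \le k(n-d+1)-2$, resp.\ $k(n-d+1)-1$. This completes the induction and the first assertion. For the last sentence, take $e = \lfloor \tfrac{n+1}{n-d+1} \rfloor$; then $e+1 > \tfrac{n+1}{n-d+1}$, so $(e+1)(n-d+1) \ge n+1$, i.e.\ $(e+1)(n-d+1)-2 \ge n-1$, and the first assertion applies.

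I expect the main obstacle to be case (ii): one must be careful to compute the dimension of the family of maps multiply covering a fixed rational curve correctly --- in particular, that the choice of a preimage of $p$ adds nothing because it is absorbed by automorphisms of the domain --- and then verify the resulting inequality for all $m \ge 2$ and for both smooth and singular $p$, which is exactly where $n \ge d+2$ is needed. A secondary technical point is that the boundary of $\overline{\MM}_{0,1}(\PP^n,k)$ is a Cartier divisor, so that $W$ meeting it costs exactly one dimension; this is standard (see \cite{FP}) but should be cited.
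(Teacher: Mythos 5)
Your proof is correct and follows essentially the same route as the paper: produce a reducible member of each component via Proposition \ref{oneSection}, use that the boundary is a divisor, and bound the reducible locus by Proposition \ref{expRedDim}, inducting on the degree. Your cases (i) and (ii) — in particular the dimension count ruling out components consisting entirely of multiple covers, which is needed to legitimately invoke the ``unique image'' hypothesis of Proposition \ref{oneSection} — are careful refinements of points the paper's one-paragraph proof leaves implicit (the multiple-cover count mirrors Proposition \ref{genInj}), but the underlying argument is the same.
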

As in Section \ref{HRSsummary}, we refer to $\lfloor \frac{n+1}{n-d+1} \rfloor$ as the threshold degree.
\begin{proof}
Because for $k \geq e+1$ every irreducible component of the space of degree $k$ rational curves through a point must have dimension at least $(e+1)(n-d+1)-2 \geq n-1$, we see by Proposition \ref{oneSection} that every component of the space of degree $k$ rational curves through an arbitrary point of $X$ must contain reducible curves.  Since the space of reducible curves is a divisor in $\overline{\MM}_{0,0}(\PP^n,e)$, by Proposition \ref{expRedDim}, we see that every component of the space of degree $k$ rational curves through an arbitrary point must have the expected dimension.
\end{proof}

The following result is essentially proven in \cite{HRS}, although they do not have the term $e$-layered.  For convenience, we sketch a slightly modified version of their proof here (the proof in \cite{HRS} does not specialize all the way to brooms but instead argues strictly using trees of lines, but the main ideas are the same).

\begin{theorem}
If $n \geq d+2$ and $X \subset \PP^n$ is a smooth, degree $d$, $e$-layered hypersurface such that the space of lines through a general point is irreducible, then $\overline{\MM}_{0,1}(X,e)$ is irreducible of the expected dimension.
\end{theorem}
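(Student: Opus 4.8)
The plan is to induct on $e$, running the argument in parallel with the structure of Proposition~\ref{expRedDim}. For $e=1$, irreducibility of $\overline{\MM}_{0,1}(X,1)$ follows from the hypothesis that the space of lines through a general point is irreducible, together with the fact that $X$ is $1$-level (hence the space of lines through any point has the expected dimension $n-d-1$), so that the evaluation map $\ev: \overline{\MM}_{0,1}(X,1) \to X$ is dominant with equidimensional irreducible general fibers; standard arguments then give irreducibility of the total space and the dimension count $1\cdot(n-d+1)+n-4$. For the inductive step, assume $\overline{\MM}_{0,1}(X,k)$ is irreducible of expected dimension for all $k<e$.

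The key step is to show that every component of $\overline{\MM}_{0,1}(X,e)$ meets the boundary divisor of maps with reducible domain. Since $X$ is $e$-layered, every component of the space of degree $e$ rational curves through an arbitrary point $p$ contains reducible curves, so every component of $\ev^{-1}(p) \subset \overline{\MM}_{0,1}(X,e)$ meets the reducible locus; since $\ev$ is surjective this implies every component of $\overline{\MM}_{0,1}(X,e)$ meets the boundary. Now, a maps with reducible domain lies in the image of a gluing map $\BB_{e_1} = \overline{\MM}_{0,\{a,b\}}(X,e_1)\times_X \overline{\MM}_{0,\{c\}}(X,e_2) \to \overline{\MM}_{0,1}(X,e)$ with $e_1+e_2=e$, $e_1,e_2\geq 1$. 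By the inductive hypothesis each factor is irreducible, and (using $e$-levelness, which holds since $e$-layered implies $e$-level, to control that the relevant evaluation maps are flat/equidimensional onto $X$) the fiber product $\BB_{e_1}$ is irreducible. Its image in $\overline{\MM}_{0,1}(X,e)$ is therefore irreducible, of dimension exactly $e(n-d+1)+n-5$, one less than the expected dimension of $\overline{\MM}_{0,1}(X,e)$.

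It remains to show that all of these boundary pieces lie in the closure of a single component of $\overline{\MM}_{0,1}(X,e)$. The strategy is the standard smoothing argument: $\overline{\MM}_{0,1}(X,e)$ is a local complete intersection stack of pure expected dimension (this follows from the earlier results — $X$ being $k$-level for all $k$, hence $\overline{\MM}_{0,0}(X,e)$ a local complete intersection of the expected dimension, and similarly with a marked point), so every component has dimension $e(n-d+1)+n-4$, and each meets the boundary divisor in codimension one. Since a general point of each boundary piece $\mathrm{im}(\BB_{e_1})$ parameterizes a chain of two free (immersed, with nonspecial normal bundle) rational curves, such a map smooths to a degree $e$ map, i.e. the boundary piece is a limit of maps with irreducible domain; so each component of $\overline{\MM}_{0,1}(X,e)$ contains maps with irreducible domain whose general member is free. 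Finally, one shows any two such are connected: deform a free irreducible degree $e$ map so that it degenerates to a chain $e_1+e_2$ with $e_1=1$, use irreducibility of $\overline{\MM}_{0,1}(X,1)$ and of $\overline{\MM}_{0,1}(X,e-1)$ and of the fiber product $\BB_1$, and induct. Concretely: the locus of maps with irreducible free domain is smooth and irreducible (it fibers over an irreducible base of curves with the expected-dimensional deformation space), and it is dense in $\overline{\MM}_{0,1}(X,e)$ by the smoothing argument, so $\overline{\MM}_{0,1}(X,e)$ is irreducible.

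The main obstacle I expect is the bookkeeping for the fiber product $\BB_{e_1}$: to conclude it is irreducible one needs that for at least one of the two evaluation maps being fibered over $X$ (say $\ev_c : \overline{\MM}_{0,\{c\}}(X,e_2)\to X$) the map is flat with irreducible general fibers, which requires knowing the stronger statement that $\overline{\MM}_{0,1}(X,e_2)$ has irreducible general $\ev$-fiber — not merely that the total space is irreducible. This is where $e$-layeredness (every component of the fiber contains reducibles) combined with the inductive irreducibility of the total space has to be leveraged carefully, essentially rerunning the fiber-dimension analysis of Proposition~\ref{expRedDim} to see that the general $\ev$-fiber is irreducible of the expected dimension. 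The gluing-map-generic-injectivity and the smoothing (unobstructedness of deformations of a chain of free curves) are routine given that $n\geq d+2$ forces the normal bundles to be sufficiently positive.
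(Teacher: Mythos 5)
Your overall strategy --- use $e$-layeredness to force every component of $\overline{\MM}_{0,1}(X,e)$ into the boundary, identify the boundary with images of gluing maps from fiber products, and then smooth --- is in the same spirit as the paper's argument, but the paper closes the loop differently and your version has a gap at exactly the point where the two diverge. What you establish (granting the fiber-product irreducibility issue you flag yourself) is that each component of $\overline{\MM}_{0,1}(X,e)$ contains \emph{some} boundary piece $\mathrm{im}(\BB_{e_1})$, and that each boundary piece, having a general point which is a chain of two free curves and hence a smooth point of the moduli space, lies in a \emph{unique} component. That only bounds the number of components by the number of splittings $e_1+e_2=e$; it does not show that the pieces for different $e_1$ all lie in one common component. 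Your closing paragraph tries to finish by asserting that ``the locus of maps with irreducible free domain is smooth and irreducible (it fibers over an irreducible base of curves\dots)'' --- but the irreducibility of that base is precisely the statement being proved, so this step is circular. The sketch ``degenerate to a chain with $e_1=1$ and induct'' is the right instinct but is not carried out: you would need to show that every component contains the \emph{specific} piece $\mathrm{im}(\BB_1)$, or that the various $\mathrm{im}(\BB_{e_1})$ are linked through deeper boundary strata.

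The paper resolves this by degenerating much further: by induction and $1$-levelness, every component of the space of degree $e$ curves through a general point $p$ contains trees of lines, and every tree of lines specializes to a ``broom'' --- $e$ lines through one common point distinct from $p$. The broom family is a single irreducible family (this is where the hypothesis that the lines through a general point form an irreducible family enters; note your proposal never uses this hypothesis beyond $e=1$, which is a sign something is missing), a codimension count shows that every component containing one broom contains all of them, and balancedness of the normal bundles of the lines in a general broom gives the $H^1$-vanishing that places the broom family in a unique component. This single irreducible anchor, contained in every component and in only one, is what your argument lacks. Your concern about needing irreducibility of general $\ev$-fibers (not just of total spaces) to make $\BB_{e_1}$ irreducible is also well taken, and is another reason the paper works fiberwise over a point and descends all the way to lines, where fiber irreducibility is a hypothesis rather than something to be proved.
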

\begin{proof}
By induction, it follows that the space of degree $e$ rational curves through a general point $p$ contains curves that are trees of lines with no nodes at $p$.  By $1$-levelness, it follows that any tree of lines can be specialized to a ``broom'' of lines, that is, a set of $e$ lines all passing through the same point which is distinct from the fixed point $p$.  By irreducibility of the space of lines through a general point, it follows that the space of brooms is irreducible, and considering codimensions shows that every component of the space of rational curves through $p$ that contains a broom contains the entire space of brooms.

For a general broom, the lines that it contains will all have balanced normal bundle in $X$.  (Recal that a vector bundle $E$ on $\PP^1$ is \emph{balanced} if $H^1(\End(E)) = 0$).  It follows that the pullback of the tangent sheaf $T_X$ of $X$ twisted by $-p$ will have no $H^1$ for a general element of the family of brooms.  Thus, the family of brooms is contained in a unique irreducible component of $\overline{\MM}_{0,1}(X,e)$.  However, since we showed that it is contained in every component of $\overline{\MM}_{0,1}(X,e)$, our result is proven.
\end{proof}

Let $S_1$ be the closure of the set of $1$-sharp hypersurfaces, and let $S_e \subset \PP^N$ be the closure of the union of $S_1$ with the set of $e$-uneven hypersurfaces.  Note that $S_k \subset S_e$ for $k \leq e$.  

The way we prove Theorem \ref{mainResult} is by bounding the codimension of the locus of $e$-uneven hypersurfaces.  We prove the base case $e=1$ here.  The ideas are similar to those found in Section 2 of \cite{HRS}, but we need more precise dimension estimates so we restate and re-prove the result.

\begin{proposition}
\label{1levelness}
If $n \geq d+2$, then the codimension of $S_1$ in $\PP^N$ is at least $\min \{ n(d-2)+3,\binom{n}{2} - n + 1 \}$.
\end{proposition}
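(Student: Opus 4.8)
The plan is to observe that a hypersurface $X$ is $1$-sharp exactly when it contains a line in $X_{\sing}$ or it has a $1$-sharp point, and to bound the codimension in $\PP^N$ of each of these two loci; the asserted bound is then the minimum of the two. For hypersurfaces singular along a line, I would work with the incidence variety $I = \{([\ell],X) \in G(2,n+1) \times \PP^N : X \text{ is singular along } \ell\}$. For a fixed line $\ell$, the vanishing along $\ell$ of $f$ together with the $n-1$ partial derivatives of $f$ in directions transverse to $\ell$ amounts to $(d+1) + (n-1)d = nd+1$ linear conditions on the coefficients of $f$, and these are independent because they involve pairwise disjoint monomials of $f$. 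Hence the fibers of $I \to G(2,n+1)$ have codimension $nd+1$ in $\PP^N$, so $\dim I = (2n-2) + N - (nd+1)$, and the closure of the image of $I$ in $\PP^N$ has codimension at least $nd + 1 - (2n-2) = n(d-2)+3$.

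For hypersurfaces with a $1$-sharp point I would fix $p \in \PP^n$ and bound the codimension $c$, inside the hyperplane $\{X : p \in X\} \cong \PP^{N-1}$, of the locus where $p$ is a $1$-sharp point of $X$; projecting the incidence variety $\Phi_{1,\shrp}$ to $\PP^n$ then shows that the locus of hypersurfaces possessing a $1$-sharp point has codimension at least $c - n + 1$ in $\PP^N$. Taking $p = [1:0:\cdots:0]$ and writing $f = \sum_{i=0}^d f_i x_0^{d-i}$ with $f_i$ a form of degree $i$ in $x_1,\ldots,x_n$, we have $p \in X \iff f_0 = 0$, and $p \in X_{\smooth} \iff f_1 \neq 0$; the lines through $p$ contained in $X$ form the scheme $V(f_1,\ldots,f_d) \subset \PP^{n-1}$. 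When $p$ is smooth this is $V(f_2|_H,\ldots,f_d|_H)$ inside the hyperplane $H = V(f_1) \cong \PP^{n-2}$, and $p$ is $1$-sharp iff this has dimension greater than the expected value $(n-2)-(d-1) = n-d-1$; when $p$ is singular (so $f_1 = 0$) it is $1$-sharp iff $\dim V(f_2,\ldots,f_d) > (n-1)-(d-1) = n-d$. So in both cases I am reduced to bounding the codimension of the locus where a tuple of forms of degrees $2,3,\ldots,d$ on a projective space fails to cut out a subvariety of the expected codimension.

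The technical heart is the following estimate, which I would prove by induction on the number $r$ of forms: in the space of tuples $(g_1,\ldots,g_r)$ of forms of degrees $d_1 \le \cdots \le d_r$ on $\PP^m$ with $r \le m$, the locus where $\dim V(g_1,\ldots,g_r) > m - r$ has codimension at least $\min_{1 \le j \le r}\binom{d_j + m - j + 1}{m - j + 1}$. For the inductive step, a tuple in this locus either has $(g_1,\ldots,g_{r-1})$ lying in the corresponding locus for $r-1$ forms, or else $V(g_1,\ldots,g_{r-1})$ is equidimensional of dimension $m-r+1$ (by Krull's principal ideal theorem, using nonemptiness, which holds since $r - 1 < m$), in which case $g_r$ must vanish identically on one of its finitely many components, and by Lemma \ref{kdimConds} this costs at least $\binom{d_r + (m-r+1)}{m-r+1}$ conditions on $g_r$; the nonemptiness of the intermediate "good" loci (exhibited, say, by $g_i = x_i^{d_i}$) makes the fiber-dimension estimate valid. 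Applying this with $m = n-2$, $r = d-1$, and degrees $2,\ldots,d$ bounds the contribution of smooth points to $c$ below by $\min\{\binom{n}{2}, \binom{n}{3}, \ldots, \binom{n}{d}\} = \binom{n}{2}$, where the equality uses $d \le n-2$ and the unimodality of $k \mapsto \binom{n}{k}$; applying it with $m = n-1$, in combination with the codimension-$n$ condition $f_1 = 0$, bounds the contribution of singular points below by $n + \binom{n+1}{2}$. Hence $c \ge \binom{n}{2}$, the locus of hypersurfaces with a $1$-sharp point has codimension at least $\binom{n}{2} - n + 1$, and therefore $\codim_{\PP^N} S_1 \ge \min\{n(d-2)+3,\ \binom{n}{2} - n + 1\}$, as claimed.

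I expect the main obstacle to be the codimension estimate for degenerate tuples of forms: one must check that the induction genuinely exhausts the ways the zero locus can jump in dimension (the first $r-1$ forms already degenerating, versus the last form absorbing a top-dimensional component of the earlier intersection), that the intermediate loci over which the fiber-dimension bookkeeping takes place are nonempty, and that in the range $n \ge d+2$ the resulting minimum of binomial coefficients is always $\binom{m+2}{2}$. The remaining ingredients — the incidence-variety count for lines in the singular locus, the expansion of $f$ around $p$, and the projection argument from $\Phi_{1,\shrp}$ to $\PP^N$ — are routine dimension counts.
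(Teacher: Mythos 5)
Your proposal is correct and follows essentially the same route as the paper: the same decomposition of $S_1$ into hypersurfaces singular along a line and hypersurfaces with a $1$-sharp point, the same $nd+1$ count for singularity along a fixed line, and the same Taylor expansion $f = f_1 + \cdots + f_d$ at $p$ combined with an iterated application of Lemma \ref{kdimConds} to bound the codimension of the locus where $\bigcap V(f_i)$ jumps in dimension. Packaging that iteration as a standalone induction on tuples of forms (and restricting to $V(f_1)$ in the smooth case rather than treating $f_1$ as the first form) is only a cosmetic reorganization of the paper's argument, and your binomial coefficients and the final reduction $\binom{n}{2} \mapsto \binom{n}{2} - n + 1$ under the projection to $\PP^N$ match the paper's.
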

\begin{proof}
Note that $S_1$ will simply be the space of hypersurfaces singular along a line union the closure of the space of hypersurfaces having a sharp point.

First consider the space of hypersurfaces everywhere singular along a given line.  Let $f$ be the polynomial cutting out our hypersurface.  We examine what conditions are imposed on the coefficients of $f$ when we insist that $V(f)$ be everywhere singular along a given line.  If we choose coordinates so that the line is $x_1 = \cdots = x_{n-1} = 0$, then the coefficients of $x_0^{j}x_n^{d-j}$ will have to vanish for $0 \leq j \leq d$, as will the coefficients of $x_ix_0^{j}x_n^{d-j-1}$ for $0 \leq j \leq d-1$.  This is $nd+1$ conditions.  Since there is a $(2n-2)$-dimensional family of lines in $\PP^n$, this means that for a hypersurface to be singular along a line is $nd+1 - (2n-2) = n(d-2)+3$ conditions.

Thus, it remains to bound the codimension of the space of hypersurfaces with a sharp point.  By considering the natural projection map $\Phi \to \PP^n$, it will suffice to show that the codimension of $\Phi_{1,\shrp} \subset \Phi$ is at least $\binom{n}{2}$.

We do this by considering the fibers of the map $\phi: \Phi \to \PP^n$.  Because all points of $\PP^n$ are projectively equivalent, it will suffice to work with a single fiber of $\phi$, say the fiber over $p$.  Choose homogeneous coordinates with $p = [1,0,\cdots, 0]$, which means that in the affine patch $D_+(x_0) = \mathbb{A}^n$ we have $p = (0,\cdots,0)$, and let $f$ be an equation cutting out a hypersurface $X$ which contains $p$.  We want to understand how many conditions are imposed on the coefficients of $f$ when we insist that $p$ be a $1$-sharp point of $X$.  Take the Taylor expansion of $f$ at $p$ in this affine coordinate chart, writing $f = f_1 + f_2 + \cdots + f_d$, where $f_i$ is a homogeneous polynomial of degree $i$.

Note that if we identify the space of lines in $\PP^n$ passing through $p$ with $\PP^{n-1}$, then the space of such lines that lie in $X$ will be the intersection of the $V(f_i).$ We therefore only need to analyze when this intersection has larger dimension than would be expected.

By Lemma \ref{kdimConds}, it is at least $\binom{d+k}{d}$ conditions for a hypersurface to contain a $k$-dimensional subvariety.  We consider separately the case where $X$ is singular at $p$ and $X$ is smooth at $p$.

First, suppose $X$ is singular at $p$, i.e., $f_1 = 0$.  Then $f_2$ will be non-zero outside of a codimension $\binom{n+1}{2}$ variety, and given that $f_2 \neq 0$, $V(f_3)$ will not contain any component of $V(f_2)$ outside of a codimension $\binom{n-2+3}{3} = \binom{n+1}{3}$ variety.  Similar, if $\bigcap_{1<i<j} V(f_i)$ has dimension $n-j+1$, $V(f_j)$ will not contain any component of $\bigcap_{1<i<j} V(f_i)$ outside of a $\binom{n-j+1+j}{j}=\binom{n+1}{j}$-codimensional variety.  For $n \geq d+2$, $\binom{n+1}{j} \geq \binom{n}{2}$ for $2 \leq j \leq d$.

Now suppose $X$ is nonsingular at $p$, i.e., $f_1 \neq 0$.  Then $f_2$ will not contain any component of $V(f_1)$ outside of a codimension $\binom{n-2+2}{2} = \binom{n}{2}$ variety.  Similarly, if $\bigcap_{i<j} V(f_i)$ has dimension n-j, $V(f_j)$ will not contain any component of $\bigcap_{i<j} V(f_i)$ outside of a codimension $\binom{n-j+j}{j} = \binom{n}{j}$ variety.  For $n \geq d+2$ and $2 \leq j \leq d$, we see that $\binom{n}{j} \geq \binom{n}{2}$.

\end{proof}

For the proof of our main result, we need to understand which hypersurfaces contain small-degree rational curves in their singular loci.  The following Proposition bounds the codimension of such hypersurfaces.

\begin{proposition}
\label{singHyps}
The space of degree $d$ hypersurfaces singular along a degree $e$ rational curve has codimension at least $n(d-e-1)-e+4$.
\end{proposition}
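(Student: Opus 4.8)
The plan is to mimic the line case from Proposition \ref{1levelness} but to keep careful track of the extra parameters coming from the curve. The singular-along-a-curve condition factors through the incidence variety $\{(C, X) : X \text{ is singular along } C\}$, where $C$ ranges over the space of degree $e$ rational curves in $\PP^n$. First I would estimate the dimension of the space of degree $e$ rational curves in $\PP^n$: parameterized by maps $\PP^1 \to \PP^n$, this is $(e+1)(n+1) - 1 - 3 = en + e + n - 3$ dimensional (modding out by automorphisms of $\PP^1$), and using instead the Chow/Hilbert-type count one gets the same leading behavior; I will use the bound $\dim \{\text{degree } e \text{ rational curves}\} \le en + e + n - 3$. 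Then, fixing such a curve $C$, I must bound how many conditions on the coefficients of $f$ (equivalently, the codimension in $\PP^N$ of the fiber) it is to force $X = V(f)$ to be singular at every point of $C$.

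For the second step, fix a degree $e$ rational curve $C$ with normalization $\nu : \PP^1 \to \PP^n$. Saying $X$ is singular along $C$ means that all $n+1$ partial derivatives $\partial_i f$ vanish on $C$, i.e. $\nu^*(\partial_i f) = 0$ in $H^0(\PP^1, \OO(e(d-1)))$ for each $i$. Each such vanishing is $e(d-1) + 1$ linear conditions on the coefficients of $f$, giving a naive total of $(n+1)(e(d-1)+1)$. But these conditions are far from independent: the Euler relation $\sum x_i \partial_i f = d\,f$ means that vanishing of $f$ on $C$ plus vanishing of $n$ of the partials forces the last, and more importantly the $n+1$ sections $\nu^*(\partial_i f)$ are tied together by the $n+1$ coordinate functions $\nu^*(x_i) \in H^0(\OO(e))$, which satisfy many linear relations among their products. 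The clean way to organize this is: the conditions "$\partial_i f$ vanishes on $C$ for all $i$" cut out exactly the linear subspace of $f$ whose image in $H^0(C, N^\vee_{C/\PP^n}(d)) $-type sheaf vanishes; concretely one computes the rank of the linear map $f \mapsto (\nu^*\partial_0 f, \dots, \nu^*\partial_n f)$. I would bound this rank from below, subtract from $N+1 = \binom{n+d}{d}$, and obtain that being singular along a fixed $C$ is at least $n(d-1)e - (\text{correction})$ conditions; the bookkeeping is designed so that after subtracting the dimension $en+e+n-3$ of the family of curves, one lands at $n(d-e-1) - e + 4$.

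Alternatively, and perhaps more robustly, I would degenerate: a general degree $e$ rational curve specializes to a chain of $e$ lines, and being singular along the chain is at least as many conditions as being singular along each of the $e$ component lines while only sharing constraints at the $e-1$ nodes. By Proposition \ref{1levelness}'s computation, being singular along a single line is $nd+1$ conditions; being singular at a point is $n$ conditions (vanishing of $f_1$ in a local chart). A chain of $e$ lines is thus at least $e(nd+1) - (e-1)\cdot(\text{overlap at nodes})$ conditions; being singular at each of the $e-1$ nodes is already forced and accounts for the overlap, so one gets roughly $e(nd+1) - (e-1) n$ conditions to be singular along a fixed chain, and then subtracting the dimension $2(n-1) + (e-1)$ of the space of such chains of lines (each of the $2n-2$ for the first line, plus the choices down the chain — I will compute this precisely) gives the stated codimension $n(d-e-1) - e + 4$.

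The main obstacle is the dependency count in step two: the $n+1$ conditions $\nu^*(\partial_i f) = 0$ genuinely overlap, and getting a \emph{sharp enough} lower bound on the rank of $f \mapsto (\nu^*\partial_i f)_i$ — rather than a wasteful one — is what makes the final inequality come out with the right constant. I expect the degeneration-to-a-chain-of-lines approach to be the cleanest way around this, since it replaces a rank computation over $H^0(\OO(e))$ with the already-established line count plus an elementary accounting of the nodes, at the cost of a semicontinuity argument to push the bound from the special chain back to a general degree $e$ rational curve.
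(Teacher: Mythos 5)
Your overall skeleton matches the paper's: compute the dimension $(e+1)(n+1)-4 = en+e+n-3$ of the family of degree $e$ rational curves, and then show that singularity along a \emph{fixed} such curve imposes at least $nd+1$ conditions on $f$, so that the difference is $nd+1-((n+1)(e+1)-4)=n(d-e-1)-e+4$. You correctly identify that the whole content is the second step, but you do not actually carry it out in either of your two proposed routes, and this is a genuine gap. Route (a) — bounding the rank of $f \mapsto (\nu^*\partial_0 f,\dots,\nu^*\partial_n f)$ — is stated only as an intention ("the bookkeeping is designed so that\dots one lands at\dots"); no lower bound on that rank is produced. Route (b) — specializing to a chain of $e$ lines — has the semicontinuity direction right (the special fiber imposes no more conditions than the general one, so a count at the chain bounds the count at the general curve from below), but it has two problems. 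First, the condition count along a chain, $e(nd+1)-(e-1)\cdot(\text{overlap})$, is itself only a guess and would need proof. Second, and more seriously, the proposition quantifies over \emph{all} degree $e$ rational curves, not just general ones: the incidence variety $\{(C,X)\}$ can have components lying over proper subvarieties of the space of curves (plane curves, curves spanning small linear spaces, etc.), and for those you would need the generic member of \emph{that} stratum to degenerate to a chain of lines, which is not established and is not obviously true.

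The paper's proof closes exactly this gap with a degeneration you did not consider: choose coordinates so that the given curve $C$ (an arbitrary degree $e$ rational curve) misses the codimension-two plane $a_0=a_1=0$, and apply the one-parameter family of automorphisms $[a_0:a_1:a_2:\cdots:a_n]\mapsto[a_0:a_1:ra_2:\cdots:ra_n]$. As $r\to 0$ every point of $C$ limits onto the line $a_2=\cdots=a_n=0$, so the set-theoretic flat limit is that single line; being singular along the limit forces singularity along the line, which is the $nd+1$ conditions already computed in Proposition \ref{1levelness}, and upper semicontinuity of $\dim\{f : V(f)\ \text{singular along}\ \mathcal{F}_r\}$ transfers the bound back to $C$. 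This works uniformly for \emph{every} degree $e$ rational curve, which is what your chain-of-lines degeneration does not deliver. If you want to salvage your approach, replace the chain of lines by this scaling degeneration onto a single line; otherwise you must both prove the condition count for chains and prove that the generic curve of every component of the incidence variety specializes to a chain.
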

\begin{proof}
The space of degree $e$ rational curves in $\PP^n$ has dimension $(n+1)(e+1)-4$, so we just need to check that the space of hypersurfaces singular along a given degree $e$ rational curve $C$ has codimension at least $nd+1$, the codimension of the space of hypersurfaces singular along a line. We will reduce to this case by deforming $C$ to a line.

Without loss of generality, we may assume that $C$ does not intersect the $(n-2)$-plane $a_0=a_1=0.$ Now consider the closed subvariety $\mathcal{F}^\circ$ of $\mathbb{P}^n\times\mathbb{A}^1-\{0\}$ whose fiber above a point $r$ of $\mathbb{A}^1-\{0\}$ is the image of $C$ under the automorphism $[a_0:a_1:\cdots : a_n]\rightarrow [a_0:a_1:ra_2:ra_3:\cdots : ra_n]$ of $\mathbb{P}^n.$ Let $\mathcal{F}$ be the closure of $\mathcal{F}^\circ$ in $\mathbb{P}^n\times\mathbb{A}^1.$

The set theoretic fiber of $\mathcal{F}$ over $0$ is the line $a_2=a_3=\cdots=a_n=0$. We thus see that the dimension of the space of hypersurfaces singular everywhere along $C$ is at most the dimension of the space of hypersurfaces singular everywhere along a line, and we already worked out the dimension of the space of hypersurfaces singular along a line in the proof of Proposition \ref{1levelness}.  Thus, the codimension of the space of degree $d$ hypersurfaces singular along any degree $e$ rational curve is at most $nd+1 - ((n+1)(e+1)-4) = n(d-e-1)-e+4$

\end{proof}

For technical reasons in the proof of the main theorem we will need to show that an $e$-level hypersurface will contain lots of curves that aren't multiple covers of other curves, which is a result of independent interest.

\begin{proposition}
\label{genInj}
If $n \geq d+2$, $e \geq 2$ and $X$ is $(e-1)$-level, then in any component of the family of degree $e$ rational curves through $p$, there is a pair $(f,C) \in \overline{\MM}_{0,0}(X,e)$ such that $f$ is generically injective.
\end{proposition}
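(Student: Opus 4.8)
The plan is to argue by contradiction: suppose there is a component $W$ of the space of degree $e$ rational curves through $p$ in which every map $(f,C)$ factors through a lower-degree curve, i.e. is a multiple cover. The first step is to stratify $W$ by the degree $e'$ of the image curve, where $e' \mid e$ and $e' < e$. For a fixed $e'$, a generically-$(e/e')$-to-one map of degree $e$ is determined by a degree $e'$ generically injective stable map $(g, C') \in \overline{\MM}_{0,0}(X, e')$ together with a degree $e/e'$ cover $\PP^1 \to \PP^1$ (or a stable-map limit thereof); after accounting for the finite-dimensional choice of covering and the marked point mapping to $p$, the locus of such multiple covers through $p$ has dimension at most $\dim(\ev^{-1}(p) \cap \overline{\MM}_{0,0}(X,e')) + c$ for a constant $c = c(e/e')$ coming from the space of covers (bounded by $2(e/e') - 2$ plus bounded corrections at the boundary of Kontsevich space).

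The second step is to bound $\dim(\ev^{-1}(p) \cap \overline{\MM}_{0,0}(X,e'))$ using $(e-1)$-levelness: since $e' \le e-1$, this fiber has dimension at most $e'(n-d+1) - 1$ (using the singular bound to be safe), so the multiple-cover locus for a fixed $e'$ has dimension at most $e'(n-d+1) - 1 + 2(e/e') - 2$. The third step is to compare this with the dimension of $W$: since $W$ is a component of $\ev^{-1}(p)$ inside $\overline{\MM}_{0,0}(X,e)$, and every component of that fiber has dimension at least the expected dimension $e(n-d+1) - 2$ (this lower bound holds because $\overline{\MM}_{0,1}(X,e)$ is cut out in $\overline{\MM}_{0,1}(\PP^n,e)$ by a section of a bundle of the stated rank, so every component has at least the expected dimension, and the fiber over $p$ likewise), we would need
\[ e'(n-d+1) - 1 + 2(e/e') - 2 \ \geq \ e(n-d+1) - 2. \]
Writing $e = m e'$ with $m = e/e' \geq 2$, this rearranges to $(m-1)e'(n-d+1) \leq 2m - 1$, i.e. roughly $e'(n-d+1) \leq \frac{2m-1}{m-1} \leq 3$. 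Since $n \geq d+2$ forces $n-d+1 \geq 3$ and $e' \geq 1$, the only way this can happen is the extreme boundary case, which one rules out by a slightly more careful accounting — e.g. noting that when $n - d + 1 = 3$ and $e' = 1$ and $m = 2$ the inequality is an equality so $W$ would consist entirely of double covers of lines, but the space of lines through $p$ has dimension exactly $n - d - 1$ by $1$-levelness (which follows from $(e-1)$-levelness), and double covers of these give dimension $n - d - 1 + 2 = n - d + 1 < e(n-d+1) - 2 = 2(n-d+1) - 2$ precisely when $n - d + 1 > 3$; when $n-d+1 = 3$ one gets equality again and must instead observe that such a $W$ cannot be a whole component because a generic point of $\overline{\MM}_{0,0}(X,e)$ near it deforms the image curve, or invoke that reducible curves (whose images are non-degenerate) lie in the closure — here the cleanest fix is to also include, via Corollary \ref{lotsaCurvesCor} and the structure of the boundary, the reducible locus, which has the expected dimension and hence dominates $W$, contradicting that $W$ is a component consisting of multiple covers.

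The main obstacle I expect is the careful bookkeeping at the boundary of Kontsevich space: a "multiple cover" in the stable-maps sense includes maps with reducible or contracted domain components lying over the image curve, so the clean count "a degree $e'$ map plus a degree $e/e'$ cover" must be replaced by a sum over dual-graph strata, and one must check that none of these strata is larger than the bound above. I would handle this by the standard device of mapping the locus of maps with fixed image curve $C'$ to $\overline{\MM}_{0,0}(C', e/e') = \overline{\MM}_{0,0}(\PP^1, e/e')$ (which has dimension $2(e/e') - 2$) and noting the fibers of $\overline{\MM}_{0,0}(X,e) \to \overline{\MM}_{0,0}(X, e')$ over the generically-injective locus are exactly these, with the boundary contributing no extra dimension; the remaining non-generically-injective part of $\overline{\MM}_{0,0}(X,e')$ is handled by descending induction on $e'$. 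The base of the induction, $e' = 1$, is clean since lines are automatically generically injective.
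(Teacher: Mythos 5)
Your proposal is essentially the paper's argument: bound the locus of degree-$m$ covers of degree-$e'$ image curves through $p$ by $(\dim$ of degree-$e'$ curves through $p) + 2m-2$, using $(e-1)$-levelness for the first term, and compare with the lower bound for the dimension of any component of $\ev^{-1}(p)$. The one place you go astray is that you pair the \emph{singular}-point upper bound $e'(n-d+1)-1$ with the \emph{smooth}-point component lower bound $e(n-d+1)-2$; this mismatch is what manufactures your boundary case $e'=1$, $m=2$, $n-d+1=3$, and your treatment of it is the weakest part of the write-up (it also contains a slip: $n-d+1 < 2(n-d+1)-2$ holds precisely when $n-d+1>2$, not $>3$, so the smooth case is in fact never tight and none of the proposed "fixes" is needed). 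If you instead split into cases as the paper does --- for $p$ smooth compare $e'(n-d+1)-2+2m-2$ against $e(n-d+1)-2$, and for $p$ singular compare $e'(n-d+1)-1+2m-2$ against the lower bound $e(n-d+1)-1$ valid at singular points --- both reduce to $e(n-d+1)>2m$, which is immediate from $e\ge m$ and $n-d+1\ge 3$, with no boundary analysis; your handling of the boundary strata of the Kontsevich space via $\overline{\MM}_{0,0}(\PP^1,m)$ is fine and matches the paper's implicit $2m-2$ count.
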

\begin{proof}

Let $k > 1$ be a factor of $e$.  We claim that the dimension of the space of degree $k$ covers of a degree $\frac{e}{k}$ curve is smaller than the dimension of curves through $p$.  We assume that $p$ is a smooth point of $X$, since the computation is similar if $p$ is a singular point (for $p$ singular, everything works the same except in the exceptional case $k=e=2$, we need to use the fact that there will be a $(n-d)$-family of lines through a singular point).  The space of degree $k$ covers of a degree $\frac{e}{k}$ curve through $p$ has dimension
\[ \frac{e}{k} \left( n-d+1 \right) - 2 + 2k-2 = \frac{e}{k} ( n-d+1 ) + 2k - 4 .\]
Any component of the family of degree $e$ rational curves through $p$ will have dimension at least
\[ e(n-d+1) - 2 .\]

Thus, we need only show that
\[ e(n-d+1) - 2 > \frac{e}{k} ( n-d+1 ) + 2k - 4 .\] 
Rearranging, we obtain
\[ e \left( 1-\frac{1}{k} \right)(n-d+1) > 2k-2 .\]
or
\[ e\left(n-d+1\right) > 2k.\]
which is clear, as $e \geq k$ and $n-d+1\geq 3$.

\end{proof}

\section{Proof of Main Result}
\label{mainResultSection}
The proof of our main result proceeds by inductively bounding the codimensions of the spaces of $e$-uneven hypersurfaces.  To do this, we show that if $\text{codim } S_{e-1} - \text{codim } S_e$ is too large, then we can find a large family of hypersurfaces and points with no reducible curves through the point.  We then apply Bend-and-Break to the family of curves in those hypersurfaces through those points.  We can imagine ``borrowing'' the curves from nearby hypersurfaces to have enough to apply Bend-and-Break.

We first need a technical lemma.

\begin{lemma}
\label{autLemma}
Let $\cC$ be the Chow variety of degree $e$ rational curves, and let $T \subset \cC$ be a $\operatorname{PGL}_{n+1}$-invariant family parameterizing at least one curve that is not a multiple line.  Then $\dim T \geq 3n-3$.
\end{lemma}
\begin{proof}
Let $\BB$ be the incidence correspondence 
\[ \{ (C,p_1,p_2,p_3) | p_i \in C, \: C \in T \} \subset \cC \times \PP^n\times\PP^n\times\PP^n.\]
Let $C \in T$ be a curve that is not a multiple line, and choose three points $p_1,p_2,p_3$ on $C$ which are not collinear. Then those three points can be sent to any other three non-collinear points in $\PP^n$ by an automorphism of $\PP^n.$ This shows that the dimension of the space $\cC_3 \subset \BB$ of triples $(C',p'_1,p'_2,p'_3)$ which can be obtained by applying an automorphism of $\PP^n$ to $(C,p_1,p_2,p_3)$ is at least $3n$. But the fibers of the projection $\BB\rightarrow\cC$ are $3$-dimensional, as $p_1,p_2,p_3$ must lie on $C$, so $T$ must have dimension at least $3n-3$.
\end{proof}

\begin{theorem}
\label{mainResult}
Denote by $M$ the codimension of $S_{e-1}$.  Then the codimension of $S_e$ in $\PP^N$ is at least $\min \{M, M - 2n + e(n-d+1)-1 , n(d-e-1)-e+4 \}$.
\end{theorem}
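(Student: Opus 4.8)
The plan is to bound the codimension of each irreducible component of $S_e$ separately. Since $S_{e-1}\subseteq S_e$ we trivially have $\codim S_e\le M$, so it suffices to show that every component $\mathcal{S}$ of $S_e$ not contained in $S_{e-1}$ has codimension at least $\min\{M-2n+e(n-d+1)-1,\ n(d-e-1)-e+4\}$. Using Corollary \ref{closedUneven} (and properness of $\Phi\to\PP^N$) one checks that a general $X\in\mathcal{S}$ is $(e-1)$-layered but $e$-uneven, so $X$ either contains a rational curve of degree $\le e$ in its singular locus or has an $e$-uneven point. In the first case $(e-1)$-layeredness forces the degree to be exactly $e$, and Proposition \ref{singHyps} gives $\codim\mathcal{S}\ge n(d-e-1)-e+4$. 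So assume the general $X\in\mathcal{S}$ has an $e$-uneven point and no rational curve of degree $\le e$ in its singular locus, let $\mathcal{S}^\circ\subseteq\mathcal{S}$ be the dense open locus of such $X$, and suppose toward a contradiction that $\codim\mathcal{S}<M-2n+e(n-d+1)-1$; we may also assume $e(n-d+1)-2<n-1$, since the opposite range is covered by Corollary \ref{lotsaCurvesCor}.

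For $X\in\mathcal{S}^\circ$ with an $e$-uneven point $p$, the fact that $X$ is $(e-1)$-layered forces some component $V(X,p)$ of $\ev^{-1}(p)\subset\overline{\MM}_{0,1}(X,e)$ to consist entirely of maps with irreducible domain, giving a complete, reducible-free family. Since $\overline{\MM}_{0,1}(X,e)$ is cut out in $\overline{\MM}_{0,1}(\PP^n,e)$ by a section of a rank $de+1$ bundle and $\ev$ is nonconstant on each component, every component of $\ev^{-1}(p)$ — in particular $V(X,p)$ — has dimension at least $e(n-d+1)-2$. Assembling these over a dense open of $\mathcal{S}^\circ$ (after a finite base change) produces a variety $\mathcal{W}$ parameterizing quadruples $(X,p,[C,f],q)$ with $[C,f]\in V(X,p)$ and $q\in f(\PP^1)$, and $\dim\mathcal{W}\ge\dim\mathcal{S}+e(n-d+1)-1$; by Proposition \ref{genInj} the general such map is generically injective, so its image determines it.

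Projecting $\mathcal{W}\to\PP^n\times\PP^n$ by $(p,q)$, the image is everything by $\mathrm{PGL}_{n+1}$-homogeneity, so the fiber over a general pair $(p_0,q_0)$ has dimension at least $\dim\mathcal{W}-2n$; its closure in $\overline{\MM}_{0,0}(\PP^n,e)$ is a complete family $T$ of dimension at least $\dim\mathcal{S}+e(n-d+1)-2n-1$, all of whose members pass through $p_0$ and $q_0$ and whose general member is generically injective and hence sweeps out a surface. This is the ``borrowing'': no single $(e-1)$-layered hypersurface carries enough curves through $p_0$ to apply Bend-and-Break in the sub-threshold range, but the union over $\mathcal{S}$ does. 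Applying Proposition \ref{twoSections}, every one-parameter subfamily of $T$ contains a map with reducible domain, so the reducible locus $\mathcal{R}\subseteq T$ is a divisor and $\dim\mathcal{R}\ge\dim\mathcal{S}+e(n-d+1)-2n-2\ge\dim S_{e-1}$, using the assumed bound on $\codim\mathcal{S}$.

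The contradiction — and the main obstacle — is to bound $\dim\mathcal{R}$ strictly below $\dim S_{e-1}$. The key observation is that for $X\in\mathcal{S}^\circ$, which is $(e-1)$-level, Proposition \ref{expRedDim} bounds the reducible curves in $\ev^{-1}(p_0)$ by a family of dimension $\le e(n-d+1)-3$ whose members, since $e(n-d+1)-2<n-1$, sweep out a proper subvariety of $X$ and hence avoid the general point $q_0$; so no reducible member of $T$ lies over a hypersurface in $\mathcal{S}^\circ$. Thus $\mathcal{R}$ lies over $\mathcal{S}\setminus\mathcal{S}^\circ\subseteq S_{e-1}\cup\{X:X\text{ is singular along a degree-}e\text{ rational curve}\}$, a locus of codimension at least $\min\{M,\ n(d-e-1)-e+4\}$ in $\PP^N$. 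Bounding the fibers of $\mathcal{R}\to\PP^N$ over a fixed such hypersurface — reducible degree-$e$ stable maps through the two fixed points $p_0,q_0$, whose degree-$e_i$ subcurves are limits of pieces of curves on the nearby $(e-1)$-layered hypersurfaces and so are controlled by Proposition \ref{expRedDim} and the inductive hypotheses — then forces $\dim\mathcal{R}<\dim S_{e-1}$, the desired contradiction. Making this last dimension count precise, along with the closure and proper-intersection arguments it requires (and handling in parallel the case where the $e$-uneven point is a singular point of $X$), is where essentially all of the work lies; the singular-curve branch simply reproduces the bound $n(d-e-1)-e+4$ obtained above.
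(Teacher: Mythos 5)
Your decomposition of $S_e$ into $S_{e-1}$, the singular-along-a-curve locus, and the closure of the $e$-uneven locus matches the paper, as does the idea of assembling the reducible-free components $V(X,p)$ over all of $\mathcal{S}$ and using $\operatorname{PGL}_{n+1}$-homogeneity to feed Bend-and-Break. But your endgame diverges from the paper's and has two genuine gaps. First, you slice the total space $\mathcal{W}$ by fixing a general pair of points $(p_0,q_0)\in\PP^n\times\PP^n$ and then claim the image $T$ in $\overline{\MM}_{0,0}(\PP^n,e)$ has dimension at least $\dim\mathcal{W}-2n$. This ignores the fibers of the map that forgets the hypersurface: a fixed curve $f(C)$ lies on a large linear system of hypersurfaces, many of which can lie in $\mathcal{S}$, so the image in the space of curves can be far smaller than the fiber of $\mathcal{W}\to\PP^n\times\PP^n$. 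The paper spends its entire ``condition 3'' (generic finiteness of the map $\pi$ to the Chow variety) on exactly this point, and the argument there crucially uses a general linear slice $H\subset\PP^N$ of controlled dimension, not a slice by point conditions in $\PP^n$; without some version of that argument your lower bound on $\dim T$, and hence on $\dim\mathcal{R}$, is unsupported.

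Second, and more fundamentally, your contradiction requires an upper bound $\dim\mathcal{R}<\dim S_{e-1}$ on the locus of reducible members of $T$, and this is precisely the step you leave open (``where essentially all of the work lies''). It is not a routine count: $\mathcal{R}$ lives over $\mathcal{S}\cap S_{e-1}$, i.e.\ over hypersurfaces that are \emph{not} $(e-1)$-level, and Proposition \ref{expRedDim} says nothing about reducible curves on such hypersurfaces; ``limits of pieces of curves on nearby layered hypersurfaces'' is not a bound. (Your intermediate claim that reducible members avoid $\mathcal{S}^\circ$ because they sweep out a proper subvariety of each individual $X$ is also flawed: $q_0$ is general in $\PP^n$, and the union of these subvarieties over the positive-dimensional family $\mathcal{S}^\circ$ can cover $\PP^n$; one needs a count over the whole family.) The paper's route avoids ever having to bound a reducible locus from above: it cuts $\psi(\NN_e)$ with a general linear space $H$ whose dimension is chosen so that the resulting $(2n-1)$-dimensional family $\FF$ has $\psi(\FF)\cap S_{e-1}=\emptyset$ (this is where the hypothesis $\codim\mathcal{S}<M-2n+e(n-d+1)-1$ enters), deduces via Proposition \ref{expRedDim} that \emph{every} curve in $\FF$ is irreducible, and then gets an immediate contradiction from Corollary \ref{bendBreak}. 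To repair your argument you would either need to import that linear-slicing step, or supply the missing dimension estimate for degenerate curves on $S_{e-1}$-hypersurfaces, which the paper's machinery does not provide.
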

\begin{proof}
Note that $S_e$ is the union of three (possibly overlapping) sets: $S_{e-1}$, the space of hypersurfaces singular along a degree $e$ rational curve, and the closure of the space of hypersurfaces with an $e$-uneven point.  The codimension of $S_{e-1}$ is at least $M$ by assumption, and the codimension of the space of hypersurfaces singular along a degree $e$ rational curve is at least $n(d-e-1)-e+4$ by Proposition \ref{singHyps}.  Thus, it remains to bound the codimension of the space of hypersurfaces with an $e$-uneven point.  If $e(n-d+1)-1 \geq 2n$ (or indeed, if $e(n-d+1) \geq n+1$), then by Corollary \ref{lotsaCurvesCor} we see that any $(e-1)$-level hypersurface not singular along a degree $e$ rational curve will be $e$-level, so we need only consider the case $e(n-d+1)-1 < 2n$.  The statement is vacuous if $M \leq 2n-e(n-d+1)-1$, so we can assume $M > 2n-e(n-d+1)-1$.  We show that the closure of the space of hypersurfaces with an $e$-uneven point is codimension at least $M-2n+e(n-d+1)-1$, which will suffice to prove the theorem.

Suppose the result is false.  That is, suppose that the closure of the space of hypersurfaces with an $e$-uneven point has codimension at most $M-2n+e(n-d+1)-2$.  Denoting  
\[ \aA=\{ (p, f, C, X) \: | \: p \in f(C) \subset X, \text{the fiber of $\ev$ over $p$ contains a component } \]
 \[ \text{containing $(f,C)$ that is disjoint from the boundary $\Delta$} \} \] 
 \[\subset \overline{\MM}_{0,1}(\PP^n,e) \times \PP^N \]
we can find an irreducible component $\NN_e$ of the closure of $\aA$ such that the dimension of the projection of $\NN_e$ onto the space of hypersurfaces has codimension at most $M-2n+e(n-d+1)-2$.  Let $\cC$ be the Chow variety of rational degree $e$ curves in $\PP^n$, and let $\pi: \NN_e \to \cC$ be the natural map.  Let $\psi: \NN_e \to \PP^N$, $\phi: \NN_e \to \Phi$ and $\psi_1: \Phi \to \PP^N$ be given by the natural maps.  Note $\psi = \psi_1 \circ \phi$.

\begin{center}
$\begin{CD}
\FF  @>>>  \NN_e        @>>\pi> \cC\\
@.         @VV\phi V\\
@.         \Phi         @>>>    \PP^n\\
@.         @VV\psi_1V\\
@.         \PP^N\\
\end{CD}$
\end{center}

We claim that we can find a closed, irreducible family $\FF \subset \NN_e$ of dimension $2n-1$ with the following properties:
\begin{enumerate}
\item $\psi(\FF) \cap S_{e-1} = \emptyset$
\item If $(p,f,C,X) \in \FF$, then $C$ is irreducible.
\item $\textsf{dim} \: \pi(\FF) = 2n-1$
\end{enumerate}

First we prove the theorem assuming the claim.  Since $\pi(\FF)$ has dimension at least $2n-1$, by Corollary \ref{bendBreak} we see that $\FF$ must parameterize points $(p,f,C,X)$ with $C$ reducible, which contradicts property 2. (Condition 1 is needed to prove Condition 2).

Thus, it remains to prove the claim. We start by proving that $\pi(\NN_e)$ has dimension at least $3n-3$. By the definition of $\aA$, $\bar{\aA}$ is invariant under automorphisms of $\PP^n.$ We thus have a map $\operatorname{PGL}_{n+1}\times\NN_e \rightarrow \bar{\aA}.$ As $\operatorname{PGL}_{n+1}$ is irreducible, so is $\operatorname{PGL}_{n+1}\times\NN_e$, and thus the image of this map must be irreducible. But the image of this map contains $\NN_e$, so the image of this map must be $\NN_e.$ Thus, $\NN_e$ is preserved by automorphisms of $\PP^n$.  By Proposition \ref{genInj} there is a point $(p,f,C,X) \in \NN_e$ such that $f(C)$ is not a line, so by Lemma \ref{autLemma}, $\pi(\NN_e)$ has dimension at least $3n-3$.


We now construct $\FF$.  Let $c$ be the dimension of the generic fiber of $\phi: \NN_e \to \Phi$, and set $a = 2n-1 - c$.  If $c \geq 2n-1$, then choose $\FF$ to be a general $(2n-1)$-dimensional subvariety of a general nonempty fiber of $\phi$.  Checking the three conditions is slightly easier in this case.  For condition 1, $\psi(\FF)$ is simply a point in $\PP^N$, and a general such will be disjoint from $S_{e-1}$ by our assumptions at the beginning of this proof.  Condition 2 follows by the definition of $\NN_e$ and generality of $\FF$.  Condition 3 follows from Proposition \ref{genInj} and the fact that the curves of $\FF$ are all distinct elements of $\overline{\MM}_{0,n}(\PP^n,e)$.

Otherwise $c < 2n-1$, and the construction of $\FF$ essentially amounts to picking a subvariety of $\NN_e$ to contain as many different rational curves as possible.  In particular, we try to avoid selecting tuples $(X,C,f,p_t)$, where $X$, $C$, and $f$ are fixed while $p_t$ varies along $C$.   Let $H$ be a general plane in $\PP^N$ of dimension $N - \dim \psi(\NN_e)+a$, so that $H \cap \psi(\NN_e)$ has dimension $a$.  Let $\FF'$ be a general irreducible subvariety of $\psi_1^{-1}(H) \cap \phi(\NN_e)$ such that $\psi_1|_{\FF'}$ is a dominant, generically finite map onto $H \cap \psi(\NN_e)$.  Let $\FF$ be a component of $\phi^{-1}(\FF')$ that dominates $\FF'$.  By construction, the general fiber of the map $\psi|_{\FF}$ has dimension $c$, so that $\FF$ has dimension $a+c = 2n-1$.
 
We now check the three conditions.  We start with condition 1.  Because $c \geq e(n-d+1)-2$, $a \leq 2n-1 - (e(n-d+1) -2) = 2n-e(n-d+1)+1$.  By hypothesis $\dim \psi(\NN_e) - \dim S_{e-1} \geq 2n-e(n-d+1)+2 > a$, so it follows that $\psi(\FF) = \psi(\NN_e) \cap H$ is disjoint from $S_{e-1}$.

For condition 2, we see from generality of $\FF$ and definition of $\NN_e$ that for a general $(p,X) \in \phi(\FF)$, every $C$ with $(p,f,C,X) \in \FF$, will be irreducible.   Additionally, every hypersurface in $\psi(\FF)$ is $(e-1)$-level by condition 1.  Since the general fiber of $\phi|_{\FF}$ contains no irreducibles by construction, it follows from Corollary \ref{closedUneven} it follows that every $C$ with $(p,f,C,X) \in \FF$ will be irreducible.

To prove condition 3, we show that $\pi|_{\FF}$ is generically finite.  Let $(p,f,C,X) \in \FF$ be general.  We claim $\pi$ is finite at $(p,f,C,X)$.  By Proposition \ref{genInj} and generality of $(p,f,C,X)$, $f$ will be generically injective.  Define $\BB = \{(p',X') \in \Phi \: | \: p' \in f(C) \subset X' \} \cap \phi(\NN_e)$.  Since $(\pi, \phi)$ is injective by construction, $\BB \cap \phi(\FF)$ will be the fiber of $\pi$ over $f(C)$.  Since the image of $\pi$ had dimension at least $3n-3$, $\BB$ has codimension at least $3n-3$ in $\phi(\NN_e)$.  Since the fibers of $\psi_1|_{\BB}$ are $1$-dimensional while the fibers of $\psi_1$ are $(n-1)$-dimensional, this means $\psi_1(\BB)$ has codimension at least $2n-1$ in $\psi(\NN_e)$, which means that $H$ intersects $\psi_1(\BB)$ in at most finitely many points by generality of $H$.  Since $\psi_1|_{\FF'}$ is finite, this shows that $\pi$ is finite at $(p,f,C,X)$.  This suffices to show condition 3.

\end{proof}

The rest is just working out the numbers.  We know the result for $d \leq \frac{n+1}{2}$, so it remains to consider $d \geq \frac{n+1}{2}$.  If $n \leq 5$ then $d \leq n-2$ means $d \leq 3 = \frac{n+1}{2}$, so without loss of generality, we may assume $n \geq 6$.

\begin{corollary}
\label{dimSeCor}
If $d \geq \frac{n+1}{2}$ and $e \leq \frac{n+1}{n-d+1}$ then 
\[ \codim S_e \geq \binom{n}{2}+d-2en+\frac{e(e+1)}{2}(n-d+1) - e + 1 . \]
\end{corollary}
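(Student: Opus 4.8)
The plan is to prove the bound by induction on $e$, feeding the base case from Proposition~\ref{1levelness} and the inductive step from Theorem~\ref{mainResult}; the only real content will be verifying that each of the three quantities in the $\min$ of Theorem~\ref{mainResult} is at least the asserted bound. Throughout, write $g(e) = \binom{n}{2} + d - 2en + \tfrac{e(e+1)}{2}(n-d+1) - e + 1$ for the right-hand side. Two identities I will use are $g(1) = \binom{n}{2} - n + 1$ and $g(e) - g(e-1) = -2n + e(n-d+1) - 1$, both obtained by expanding, together with $d(n-1) - \tfrac{n(n-1)}{2} = \tfrac{(n-1)(2d-n)}{2}$.

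For the base case $e=1$, Proposition~\ref{1levelness} gives $\codim S_1 \ge \min\{\,n(d-2)+3,\ \binom{n}{2}-n+1\,\}$, so I need only check $n(d-2)+3 \ge \binom{n}{2}-n+1$; rearranging, this says $\tfrac{n(2d-n-1)}{2} \ge -2$, which holds because $d \ge \tfrac{n+1}{2}$ makes $2d-n-1 \ge 0$. Hence $\codim S_1 \ge g(1)$.

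For the inductive step, suppose $\codim S_{e-1} \ge g(e-1)$ with $2 \le e \le \tfrac{n+1}{n-d+1}$. Applying Theorem~\ref{mainResult} with $M = \codim S_{e-1}$, and noting that its conclusion is nondecreasing in $M$ so that the inductive hypothesis as an inequality (not an equality) suffices, I obtain
\[
\codim S_e \ \ge\ \min\bigl\{\, g(e-1),\ \ g(e-1) - 2n + e(n-d+1) - 1,\ \ n(d-e-1)-e+4 \,\bigr\}.
\]
The middle term is exactly $g(e)$ by the identity above, and $g(e) \le g(e-1)$ since $e(n-d+1) \le n+1 \le 2n+1$; so the first two terms are $\ge g(e)$, and it remains to show $n(d-e-1)-e+4 \ge g(e)$. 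Clearing denominators and substituting the identity for $d(n-1) - \tfrac{n(n-1)}{2}$, this reduces to $(n-1)(2d-n) + 2n(e-1) + 6 \ge e(e+1)(n-d+1)$. I then bound the right side using the hypothesis $e(n-d+1) \le n+1$: $e(e+1)(n-d+1) = (e+1)\cdot e(n-d+1) \le (e+1)(n+1)$, after which the inequality simplifies to $(n-1)(2d-n+e) \ge 3n-5$. Since $d \ge \tfrac{n+1}{2}$ gives $2d-n \ge 1$ and $e \ge 2$, the factor satisfies $2d-n+e \ge 3$, so $(n-1)(2d-n+e) \ge 3(n-1) = 3n-3 \ge 3n-5$, completing the induction.

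I do not expect a genuine obstacle: the argument is a clean induction, and the only step with moving parts is the final inequality $n(d-e-1)-e+4 \ge g(e)$. The two points requiring care are invoking Theorem~\ref{mainResult} through the monotonicity-in-$M$ remark, and checking that both hypotheses are genuinely used — $d \ge \tfrac{n+1}{2}$ in the base case and in the bound $2d-n \ge 1$, and $e \le \tfrac{n+1}{n-d+1}$ precisely in passing from $e(e+1)(n-d+1)$ to $(e+1)(n+1)$.
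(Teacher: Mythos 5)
Your proof is correct and follows essentially the same route as the paper: induction on $e$ with Proposition~\ref{1levelness} supplying the base case and Theorem~\ref{mainResult} the inductive step, the crux in both being the comparison $n(d-e-1)-e+4 \geq$ the claimed bound (your $g(e)$). The only cosmetic difference is that you verify that comparison directly for each $e \geq 2$ using $e(n-d+1) \leq n+1$, whereas the paper tracks how both sides change as $e$ increments starting from $e=1$; both computations check out.
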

\begin{proof}
First, we show that $n(d-e-1)-e+4 \geq \binom{n}{2}+d-2en+\frac{e(e+1)}{2}(n-d+1)$.  We will first show that the inequality is strict for $e \leq \frac{n}{n-d+1}$.  For $e=1$ we have 
\[ n(d-2)+3 > \binom{n}{2} - n + 1 \]
is equivalent to
\[ (d-2) > \frac{1}{n} \binom{n}{2} - 1 - \frac{2}{n} \]
which, since $d \geq \frac{n+1}{2}$, is equivalent to
\[ d > \frac{1}{n} \binom{n}{2} + 1 - \frac{2}{n} = \frac{n-1}{2} + 1 - \frac{2}{n} = \frac{n+1}{2} - \frac{2}{n} . \]
For $e \leq \frac{n}{n-d+1}$, note that each time we replace $e-1$ with $e$, the left-hand side decreases by $n+1$, while the right hand side decreases by $2n-e(n-d+1)+1$.  We see that $2n-e(n-d+1)+1 \geq n+1$, which together with the base case $e=1$ shows that $n(d-e-1)-e+4 > \binom{n}{2}+d-2en+\frac{e(e+1)}{2}(n-d+1)$ for this range of $e$.  For $e = \frac{n+1}{n-d+1}$, we see that replacing $e-1$ with $e$ decreases the right-hand side by at least $n$.  Together with the fact that the inequality was strict for $e-1$, this proves $n(d-e-1)-e+4 \geq \binom{n}{2}+d-2en+\frac{e(e+1)}{2}(n-d+1)$ for $e \leq \frac{n+1}{n-d+1}$.

Now we prove the entire statement of the corollary by induction.  For the base case $e=1$, we need to show $\codim S_1 \geq \binom{n}{2} + d - 2n + (n-d+1) = \binom{n}{2} - n + 1$, which follows from Proposition \ref{1levelness} and the above discussion.

Finally, we proceed with the induction step.  Suppose $\codim S_{e-1} \geq \binom{n}{2} + d - 2(e-1)n + \frac{e(e-1)}{2}(n-d+1)-e+2$.  By Theorem \ref{mainResult}, we see that $\codim S_e \geq \min \{ \codim S_{e-1}, \codim S_{e-1}-2n+e(n-d+1) - 1, n(d-e-1) - e +4 \}$.  Using the induction hypothesis and the fact that $n(d-e-1)-e+4 \geq \binom{n}{2}+d-2en+\frac{e(e+1)}{2}(n-d+1)$, we see that
\[ \codim S_e \geq \binom{n}{2} + d - 2en + \frac{e(e+1)}{2}(n-d+1)  - e + 1. \]
\end{proof}

\begin{corollary}
If $X$ is a general hypersurface of degree $d$ in $\PP^n$ and $n \geq d+2$, then the space of rational, degree $e$ curves through an arbitrary point $p \in X$ has the expected dimension for all $e$.
\end{corollary}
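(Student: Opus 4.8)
The plan is to deduce the corollary from Corollary \ref{dimSeCor} and Proposition \ref{1levelness} by showing that a general degree $d$ hypersurface $X$ is $k$-level for every $k$. This is enough: a general hypersurface of degree $d$ is smooth, so every point $p \in X$ is smooth, and for a smooth point ``$p$ is $k$-level'' says precisely that the space of degree $k$ rational curves through $p$ has dimension at most $k(n-d+1)-2$. Together with the standard lower bound — this space is cut out in $\ev^{-1}(p) \subset \overline{\MM}_{0,1}(\PP^n,k)$ by a section of a rank $kd+1$ bundle, and $\ev$ is dominant onto $X$ — this forces it to have the expected dimension $k(n-d+1)-2$ through every $p$, which is exactly the assertion of the corollary.

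To obtain $k$-levelness for all $k$, I would first invoke Corollary \ref{lotsaCurvesCor}: since a general $X$ is smooth it has no rational curves in its (empty) singular locus, so it suffices to show that a general $X$ is $e_0$-level for the threshold degree $e_0 = \lfloor \frac{n+1}{n-d+1} \rfloor$. For this I would prove, by induction on $e$, that every hypersurface not lying in $S_e$ is $e$-level. For $e = 1$, the description of $S_1$ in Proposition \ref{1levelness} shows that $X \notin S_1$ is neither singular along a line nor has a $1$-sharp point, hence is $1$-level. For the inductive step I would use the description of $S_e$ from the proof of Theorem \ref{mainResult} as the union of $S_{e-1}$, the locus of hypersurfaces singular along a degree $e$ rational curve, and the closure of the locus of hypersurfaces with an $e$-uneven point: if $X \notin S_e$, then $X \notin S_{e-1}$ is $(e-1)$-level by induction, $X$ contains no rational curve of degree at most $e$ in its singular locus, and $X$ has no $e$-uneven point, so (by the observation recorded just before Corollary \ref{closedUneven} that an $(e-1)$-level, $e$-sharp point must be $e$-uneven) $X$ has no $e$-sharp point and is $e$-level. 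Applying this with $e = e_0$: once $\codim_{\PP^N} S_{e_0} \geq 1$, a general $X$ lies outside the closed set $S_{e_0}$, hence is $e_0$-level and therefore $k$-level for all $k$.

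It then remains to verify $\codim_{\PP^N} S_{e_0} \geq 1$. For $d \leq \frac{n+1}{2}$, and in particular whenever $n \leq 5$, this is already the Harris--Roth--Starr theorem recalled in Section \ref{HRSsummary}, so I would assume $n \geq 6$ and $\frac{n+1}{2} \leq d \leq n-2$. Since $e_0 \leq \frac{n+1}{n-d+1}$, Corollary \ref{dimSeCor} applies with $e = e_0$ and gives
\[ \codim_{\PP^N} S_{e_0} \ \geq\ \binom{n}{2} + d - 2 e_0 n + \frac{e_0(e_0+1)}{2}(n-d+1) - e_0 + 1 . \]
Finally I would check this lower bound is at least $1$. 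Viewed as a function of $e$, the difference of its values at $e+1$ and $e$ is $(e+1)(n-d+1) - 2n - 1$, which is negative whenever $e+1 \leq \frac{n+1}{n-d+1}$; so over the admissible range the bound is decreasing, its minimum over integers occurs at $e_0 = \lfloor \frac{n+1}{n-d+1}\rfloor$, and a short computation shows this minimum is at least $1$, the tightest case being $n-d+1 = 3$ with $\frac{n+1}{n-d+1}$ an integer, where the bound collapses to $e_0 - 1 \geq 2$ for $n \geq 6$.

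The substance of the argument is entirely contained in Theorem \ref{mainResult} and Corollary \ref{dimSeCor}; the only new ingredients are the bookkeeping identifying ``$X \notin S_{e_0}$'' with ``$X$ is $e_0$-level'' and the last arithmetic verification, both routine. Accordingly I expect no genuine obstacle here — the mildest point is simply confirming that the explicit codimension bound stays positive in every admissible $(n,d)$, which is an elementary finite check.
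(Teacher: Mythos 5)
Your proposal is correct and follows essentially the same route as the paper: reduce via Corollary \ref{lotsaCurvesCor} to the threshold degree, apply the codimension bound of Corollary \ref{dimSeCor}, and verify arithmetically that this bound stays positive for $1 \leq e \leq \frac{n+1}{n-d+1}$ (the paper checks positivity by noting the expression is concave in $d$ and evaluating at the endpoints $d = n-2$ and $d = \frac{n+1}{2}$, matching your ``tightest case'' claim). The extra bookkeeping you supply --- the induction showing $X \notin S_e$ implies $X$ is $e$-level --- is left implicit in the paper but is exactly the intended reading.
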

\begin{proof}
By Corollary \ref{lotsaCurvesCor} the threshold degree is $\lfloor \frac{n+1}{n-d+1} \rfloor$, so by Corollary \ref{dimSeCor}, it remains to show that $\binom{n}{2}+d-2en+\frac{e(e+1)}{2}(n-d+1)-e+1$ is positive for $1 \leq e \leq \frac{n+1}{n-d+1}$.  Multiplying by two, it suffices to show
\[ n(n-1) - 4en + e(e+1)(n-d+1) + 2d -2e +2 > 0 .\]
The expression on the left is decreasing in $e$ for $e \leq \frac{n+1}{n-d+1}$, so it suffices to prove the result for $e = \frac{n+1}{n-d+1}$.  Dividing by $e$ gives
\[ \frac{n(n-1)}{n+1}(n-d+1) - 4n + (e+1)(n-d+1) + \frac{2d}{e} - 2 + \frac{2}{e} > 0 \]
which can be rearranged to obtain
\[ \frac{n^2+1}{n+1}(n-d+1) - 4n + n+1 + \frac{2d}{e}-2 + \frac{2}{e} > 0 \]
or
\[ \frac{n^2+1}{n+1}(n-d+1) + \frac{2d}{e} + \frac{2}{e} > 3n+1 .\]
Multiplying both sides by $n+1$, we get
\[ (n^2+1)(n-d+1) + 2d(n-d+1)+2(n-d+1) > 3n^2+4n+1 , \]
or 
\[ (n^2+2d+3)(n-d+1) > 3n^2+4n+1 .\]
The left-hand side is quadratic in $d$ with negative coefficient of $d^2$, so we need only check the endpoints to minimize it.  If $d=n-2$, the left-hand side becomes
\[ 3(n^2+2n-1) = 3n^2+6n-3. \]
This will be greater than $3n^2+4n+1$ precisely when $2n > 4$, or $n > 2$.

If $d = \frac{n+1}{2}$, the left-hand side is 
\[ (n^2+n+4)\frac{n+1}{2} .\]
For $n \geq 5$, we have
\[ (n^2+n+4)\frac{n+1}{2} = (n^2+n)\frac{n+1}{2} + 2(n+1) \geq 3n^2+3n+2n+2 > 3n^2+4n+1 .\]

This concludes the proof.
\end{proof}

\section{Conclusion}
This paper covers Conjecture \ref{expDim} for much of the Fano range.  However, as we point out in the introduction, there remain many ranges of $n$, $d$, and $e$ for which we do not know whether Conjecture \ref{expDim} is true.  This includes a few more cases in the Fano range ($n = d+1$ and $n=d$), a large swath of cases in the general type range, and the Calabi-Yau range, including the Clemens Conjecture.  We hope that more progress will be made in the future.

\bibliographystyle{plain}
\bibliography{breakingandborrowing}
\end{document}